\def\epsilon{\varepsilon}
\def\R{\mathbb{ R}}
\def\Q{\mathbb{ Q}}
\def\N{\mathbb{ N}}
\def\<{\langle}
\def\>{\rangle}
\def\norm{\|\hspace*{.35em} \|}
\newtheorem{thm}{Theorem}[section]
\newtheorem{theo}[thm]{Theorem}
\newtheorem{prop}[thm]{Proposition}
\newtheorem{coro}[thm]{Corollary}
\newtheorem{lemma}[thm]{Lemma}
\theoremstyle{definition}
\newtheorem{defi}[thm]{Definition}
\theoremstyle{remark}
\newtheorem{rema}[thm]{Remark}
\begin{document}

\title[Convex compact sets that admit a lsc strictly convex function]{Convex compact sets that admit a lower semicontinuous strictly convex function}

\author{L. Garc\'ia-Lirola}
\address{Departamento de Matem\'aticas, Facultad de Matem\'aticas, Universidad de Murcia, 
30100 Espinardo (Murcia), Spain}
\email{luiscarlos.garcia@um.es}
\author{J. Orihuela}
\address{Departamento de Matem\'aticas, Facultad de Matem\'aticas, Universidad de Murcia, 
30100 Espinardo (Murcia), Spain}
\email{joseori@um.es}
\author{M. Raja}
\address{Departamento de Matem\'aticas, Facultad de Matem\'aticas, Universidad de Murcia, 
30100 Espinardo (Murcia), Spain}
\email{matias@um.es}

\thanks{Partially supported by the grants MINECO/FEDER MTM2014-57838-C2-1-P and Fundaci\'on S\'eneca CARM
19368/PI/14}
\date{October, 2015}

\keywords{Convex compact set; Convex lower semicontinuous function; Exposed point; Continuity point}

\subjclass[2010]{Primary 46A55; Secondary 46B03,54E35}

\begin{abstract} We study the class of compact convex subsets of a topological vector space which admits a strictly convex and lower semicontinuous function. We prove that such a compact set is embeddable in a strictly convex dual Banach space endowed with its weak$^*$ topology. In addition, we find exposed points where a strictly convex lower semicontinuous function is continuous. 
\end{abstract}

\maketitle

\section{Introduction}

A well-known result of Herv\'e~\cite{herve} says that a compact convex subset $K \subset X$ of a locally convex space is metrizable if and only if there exists $f\colon K \rightarrow {\mathbb R}$ which is both continuous and strictly convex. It happens that lower semicontinuity is a very natural hypothesis for a convex function, so it is natural to wonder if the existence of a strictly convex lower semicontinuous function on compact convex subset $K \subset X$ of a locally convex space enforces special topological properties on $K$\@. Ribarska proved~\cite{riba, riba2} that such a compact is {\it fragmentable} by a finer metric, and in particular it contains a completely metrizable dense subset. The third named author proved~\cite{raja} that the same is true for the set of its extreme points $\mbox{ext}(K)$\@. On the other hand, Talagrand's argument in~\cite[Theorem 5.2.(ii)]{DGZ} shows that $[0,\omega_1]$ is not embeddable in such a compact set. In addition, Godefroy and Li showed~\cite{GL} that if the set of probabilities on a compact group $K$ admits a strictly convex lower semicontinuous function then $K$ is metrizable.

Our purpose here is to continue with the study of the class of compact convex subsets which admits a strictly convex lower semicontinuous function. We shall denote this class by $\mathcal{SC}$\@. The first remarkable fact that we have got is a Banach representation result.

\begin{theo}\label{main1}
Let $X$ be a locally convex topological vector space and let $K \subset X$ 
be convex compact subset.
Then there exists a function $f\colon K \rightarrow {\mathbb R}$ which is both lower 
semicontinuous and strictly convex if and only if $K$ imbeds linearly into a 
strictly convex dual Banach space $Z$ endowed with its weak$^*$ topology.
\end{theo}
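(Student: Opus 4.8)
The plan is to prove the two implications separately, with the reverse implication being routine and the direct implication carrying all the weight.

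\emph{Sufficiency.} Suppose $K$ imbeds as an affine homeomorphism onto a $w^*$-compact convex subset of $(Z,w^*)$, where $Z=Y^*$ is strictly convex. The dual norm of $Z$ is $w^*$-lower semicontinuous, being the supremum of the $w^*$-continuous functionals $z\mapsto \langle z,y\rangle$ over $y\in B_Y$; hence $z\mapsto \|z\|^2$ is $w^*$-lsc as well. A short computation (splitting into the cases $\|x\|\neq\|y\|$ and $\|x\|=\|y\|$, and invoking strict convexity of $Z$ in the second) shows that $\|\cdot\|^2$ is a strictly convex function. Pulling it back through the embedding, which is affine and injective, yields a strictly convex lsc function on $K$, so this direction costs nothing.

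\emph{Necessity.} Let $A(K)$ denote the Banach space of continuous affine functions on $K$ with the supremum norm. First I would realize $K$ concretely: the evaluation map $e\colon x\mapsto \delta_x$ is an affine homeomorphism of $K$ onto a $w^*$-compact convex subset of the dual ball of $A(K)$, because $A(K)$ separates the points of $K$. A genuinely \emph{linear} embedding is then obtained after translating so that a fixed $k_0\in K$ goes to $0$ and replacing $A(K)$ by the hyperplane of affine functions vanishing at $k_0$. After this reduction I may assume $K\subset Z_0:=A(K)^*$, that $0\in K$, and, subtracting the minimum value (attained since $f$ is lsc on the compact $K$), that $f\ge 0$ with $\min_K f=0$.

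\emph{The geometric core.} The idea is to convert the strict convexity of $f$ into strict convexity of a \emph{norm}, and I would exploit two equivalent manifestations of it. On the one hand, each sublevel set $\{x\in K:f(x)\le c\}$ is a $w^*$-closed strictly convex body: if $x\neq y$ lie in it then $f(\tfrac{x+y}{2})<\tfrac12(f(x)+f(y))\le c$, so the midpoint is interior and the boundary carries no segment; as $c$ grows these bodies exhaust $K$. On the other hand, viewing $Z_0\times\R$ as the dual of $A(K)\times\R$, the epigraph $E=\{(\delta_x,t):x\in K,\ t\ge f(x)\}$ is a $w^*$-closed convex set (convexity from that of $f$, closedness from lower semicontinuity), and a two-line argument shows that strict convexity of $f$ is \emph{equivalent} to every graph point $(\delta_x,f(x))$ being an extreme point of $E$: if $(\delta_x,f(x))=\tfrac12[(\delta_y,s)+(\delta_z,u)]$ with both summands in $E$, then $x=\tfrac12(y+z)$, the vertical coordinates force $\tfrac12(f(y)+f(z))=f(\tfrac{y+z}{2})$, and strict convexity gives $y=z$. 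The plan is to manufacture from these strictly convex $w^*$-closed bodies a dual norm $N$ on $Z_0$ whose unit sphere, in the directions spanned by $K$, is modelled on the graph of $f$, so that extremality of the graph points becomes the absence of segments on the sphere of $N$; combining $N$ through a strictly convex averaging with an auxiliary strictly convex dual norm controls the transverse directions and leaves $K$ still $w^*$-embedded.

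\emph{Main obstacle.} The hard part is exactly this last conversion. Passing from the one-sided, non-symmetric strictly convex geometry supplied by $f$ (a sublevel body, or the epigraph) to a genuine symmetric norm demands a symmetrization that does \emph{not} flatten the boundary points coming from $f$, since the naive hull $\mathrm{co}(C\cup(-C))$ creates segments. I expect to handle this with a Minkowski-gauge construction assembled via an expression of the form $(p(z)^2+p(-z)^2+q(z)^2)^{1/2}$, where $p$ is the gauge of a strictly convex body and $q$ an auxiliary strictly convex dual norm, and then to verify three points: that the resulting gauge is $w^*$-lower semicontinuous (hence a genuine dual norm) and equivalent to the original, that it is strictly convex in \emph{all} directions and not merely those tangent to $K$, and that the whole family of sublevel bodies (i.e.\ the points where $f$ is large) is incorporated so that the construction controls every direction spanned by $K$. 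These verifications, in the spirit of the renorming lemmas of \cite{DGZ}, are where the genuine work lies.
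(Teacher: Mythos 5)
Your sufficiency direction is fine (it is the routine direction, which the paper does not even spell out). The necessity direction, however, contains a genuine gap: by your own account the ``geometric core'' is left as a plan, and the plan as stated would not go through. Two concrete problems. First, your construction calls for ``an auxiliary strictly convex dual norm $q$'' on $Z_0=A(K)^*$ to control the transverse directions; but the existence of such a norm on $A(K)^*$ is not known a priori and is essentially of the same nature as the conclusion you are trying to reach --- $A(K)^*$ is far too large a space, and nothing forces it to admit any strictly convex dual norm. The paper avoids this circularity by never working in $A(K)^*$: it first proves, via the external convex sum $A\oplus B$ of Lemma~\ref{tech} and the proposition following it, that $B=\mbox{aconv}(K)$ itself belongs to $\mathcal{SC}(X)$, and then invokes Dixmier--Ng (Lemma~\ref{imbed}) to realize $Z=\mbox{span}(B)$ as a dual Banach space whose unit ball is exactly $B$; there are then no transverse directions left to control. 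Second, the step you defer --- passing from a strictly convex function given only on $K$ to a strictly convex \emph{symmetric body} --- is precisely the crux, and your sublevel-set picture does not address it: the sets $\{f\le c\}$ are subsets of $K$, hence typically have empty interior and are not symmetric, and your assertion that $f(\frac{x+y}{2})<c$ makes the midpoint ``interior'' is false for a merely lower semicontinuous $f$ (the strict sublevel set $\{f<c\}$ need not be open, and $K$ itself may have empty interior).

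Once the two reductions above are made, the paper's finish is elementary and is worth comparing with your proposed gauge. One symmetrizes the function by $g(x)=f(x)+f(-x)$, makes it bounded by a Baire category argument applied to $B=\bigcup_{n=1}^\infty g^{-1}((-\infty,n])$ (a point you also leave untreated: normalizing $\min f=0$ does not bound $f$ above, and an unbounded $f$ ruins any gauge construction), rescales so that $f$ takes values in $[0,1]$, and sets $C=\{x\in B_Z:\ \frac12(3\|x\|+f(x))\le 1\}$. This $C$ is symmetric, weak* closed, sandwiched between $\frac13 B_Z$ and $\frac23 B_Z$, and its level set $\{h=1\}$ contains no segment because $f$ is strictly convex; hence it is the ball of an equivalent strictly convex dual norm. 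Your expression $(p(z)^2+p(-z)^2+q(z)^2)^{1/2}$ is in the right spirit, but it cannot be completed without first (i) replacing $A(K)^*$ by $\mbox{span}(\mbox{aconv}(K))$ and (ii) extending the strictly convex function from $K$ to $\mbox{aconv}(K)$, which is where the real work of the paper lies.
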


Notice that the strictly convex norm of the dual Banach space in the statement 
is weak* lower semicontinuous, which is a stronger condition that just being a 
strictly convex Banach space isomorphic to a dual space.

If $f\colon K \rightarrow {\mathbb R}$ is a strictly convex function, then the {\it symmetric} 
defined by 
\[ \rho(x,y)=\frac{f(x)+f(y)}{2} - f(\frac{x+y}{2}) \]
provides a consistent way to measure {\it diameters} of subsets of $K$\@. This idea was successfully applied in renorming theory~\cite{MOTV}. We will prove that every nonempty subset of $K$ has slices of arbitrarily small $\rho$-diameter, we can mimic some arguments of the geometric study of the Radon\textendash{}Nikod\'{y}m property which leads to results as the following one.

\begin{theo}\label{main2}
Let $X$ be a locally convex topological vector space and
let $f\colon X \rightarrow {\mathbb R}$ be lower semicontinuous, strictly convex and bounded on compact sets. 
Then for every $K \subset X$ compact and convex, the set of points in $K$ which are both exposed and continuity points of $f|_K$ is dense in $\mbox{ext}(K)$\@.
\end{theo}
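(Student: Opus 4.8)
The plan is to follow the pattern of the geometric theory of the Radon--Nikod\'ym property, replacing the metric diameter by the $\rho$-diameter and, crucially, keeping track of the oscillation $\mathrm{osc}(f;S):=\sup_S f-\inf_S f$ at the same time. The engine of the argument is that, being convex and lower semicontinuous on $X$, $f$ is the supremum of its continuous affine minorants: for every $x_0$ and $\epsilon>0$ there is $\ell=x^*+c$ with $x^*\in X^*$, $\ell\le f$ on $X$ and $\ell(x_0)>f(x_0)-\epsilon$. Using this I would first prove the basic slicing lemma: for every nonempty $A\subseteq K$ and $\epsilon>0$ there is a slice $S=\{x\in A:\ell(x)>\sigma-\epsilon\}$ with $\rho\text{-}\mathrm{diam}(S)<\epsilon$ and $\mathrm{osc}(f;S)<\epsilon$. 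Indeed, put $\sigma=\sup_A f$ (finite, since $f$ is bounded on the compact $K$), pick an affine minorant $\ell\le f$ with $\ell(x_0)>\sigma-\epsilon$ for some $x_0\in A$, and take $S$ as above. On $S$ one has $\sigma-\epsilon<\ell\le f\le\sigma$, bounding the oscillation; and for $x,y\in S$ the midpoint satisfies $\ell(\tfrac{x+y}{2})>\sigma-\epsilon$, whence $f(\tfrac{x+y}{2})>\sigma-\epsilon$ and $\rho(x,y)=\tfrac{f(x)+f(y)}{2}-f(\tfrac{x+y}{2})<\epsilon$. Both estimates survive passing to $\overline S$, because $\{\ell\ge\sigma-\epsilon\}$ is closed and $\{f\le\sigma\}$ is closed by lower semicontinuity.

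The second ingredient is that strict convexity turns $\rho$-smallness into exposedness. If $z^*\in X^*$ attains its maximum over $K$ and the slices $S_\alpha=\{x\in K:z^*(x)>\max_K z^*-\alpha\}$ satisfy $\rho\text{-}\mathrm{diam}(S_\alpha)\to0$ as $\alpha\downarrow0$, then the maximizer is unique: any two maximizers $z,z'$ lie in every $S_\alpha$, so $\rho(z,z')=0$ and hence $z=z'$, so $z$ is exposed. Moreover, since $z$ is exposed and $K$ is compact, the slices $S_\alpha$ form a neighborhood basis of $z$ in $K$; hence if we also secure $\mathrm{osc}(f;S_\alpha)\to0$, then $f|_K$ is continuous at $z$. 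The problem is thereby reduced to producing, inside a prescribed slice of $K$, a functional $z^*$ for which both $\rho\text{-}\mathrm{diam}(S_\alpha)\to0$ and $\mathrm{osc}(f;S_\alpha)\to0$.

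To produce such functionals I would run a Baire category argument of Bourgain--Phelps type over a complete space of functionals, for instance the predual $Y$ furnished by the representation $K\hookrightarrow(Y^*,w^*)$ of Theorem~\ref{main1}, whose $w^*$-continuous functionals define the slices of $K$. For each $n$ let $U_n$ be the set of functionals admitting some $\alpha>0$ with $\rho\text{-}\mathrm{diam}(S(x^*,\alpha))<1/n$ and $\mathrm{osc}(f;S(x^*,\alpha))<1/n$. Each $U_n$ is open, by stability of slices under small perturbations of the functional, and dense: given $x^*$ and its slice, the slicing lemma applied to $A=S(x^*,\alpha)$ yields a sub-slice of small $\rho$-diameter and small $f$-oscillation, and a Bourgain-type realization lemma exhibits this sub-slice as a slice of $K$ cut by a functional arbitrarily close to $x^*$. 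Any functional in $\bigcap_n U_n$ then $\rho$-strongly exposes a continuity point of $f|_K$. Finally, Choquet's lemma says the slices through an extreme point $e$ form a neighborhood basis of $e$; starting the density step from a slice $S_0\ni e$ contained in a prescribed neighborhood $U$ produces a good functional whose exposed point lies in $S_0\subseteq U$. As exposed points are extreme, the exposed continuity points are dense in $\mathrm{ext}(K)$.

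The main obstacle is the simultaneous control of the oscillation of $f$. Small $\rho$-diameter only measures the strict-convexity gap of $f$ and does not by itself bound $|f(x)-f(z)|$; indeed one can have $\rho(x_n,z)\to0$ while $f(x_n)$ stays away from $f(z)$, since strict convexity provides no uniform modulus. This is why the oscillation estimate must be threaded through every step: it is available in the affine-minorant slices of the basic lemma precisely because there $f$ is sandwiched between $\ell$ and $\sigma$, and the delicate point is to preserve both the $\rho$-diameter and the oscillation bounds through the perturbation/realization step of the Bourgain lemma and through the passage to closures, where lower semicontinuity of $f$ controls the supremum but the affine minorant is needed to control the infimum.
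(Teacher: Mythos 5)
Your architecture (Baire category over the predual, a Bourgain-type perturbation, Choquet's lemma for the localization in $\mathrm{ext}(K)$) matches the paper's, and several of your ingredients are correct and even elegant: the affine-minorant slicing lemma does show that every nonempty $A\subseteq K$ has slices of small $\rho$-diameter \emph{and} small oscillation of $f$, and your deduction of exposedness and continuity from a shrinking family of such slices is fine. But the crux of the proof is exactly the step you dispatch in one clause: ``a Bourgain-type realization lemma exhibits this sub-slice as a slice of $K$ cut by a functional arbitrarily close to $x^*$.'' That lemma is not off-the-shelf here. In the classical (metric) setting, the passage from ``every subset of $K$ has small slices'' to ``functionals cutting small slices of $K$ are dense'' goes through the auxiliary body $C=\overline{\mathrm{conv}}\bigl(K\cup(y+V_r)\bigr)$ and an estimate of the diameter of slices of a convex hull of a union in terms of the diameter of a slice of one of the pieces; that estimate uses the triangle inequality and homogeneity of the metric. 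Neither the symmetric $\rho$ nor $\mathrm{osc}(f;\cdot)$ enjoys these: both can be large on $\mathrm{conv}(S'\cup B)$ even when they are small on $S'$ (convexity bounds $\sup f$ on a convex hull but gives no lower bound for $\inf f$ there), so the realization step is precisely where your two control quantities break down, not a routine citation.

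The paper closes this gap with two devices you do not have. First, it builds the oscillation control into the symmetric itself, taking $\rho(x,y)=\frac{f(x)^2+f(y)^2}{2}-f\bigl(\frac{x+y}{2}\bigr)^2$, so that $\rho(x,y)\ge\bigl(\frac{f(x)-f(y)}{2}\bigr)^2$ and, conversely, small oscillation of $f$ on a convex set forces small $\rho$-diameter there. Second, and decisively, instead of a dentability-of-subsets input it invokes the continuity-at-extreme-points theorem of~\cite{raja}: the auxiliary body $C$ has an extreme point $x_0$ inside the prescribed halfspace $\{w>a\}$ at which $f|_C$ is continuous, and Choquet's lemma then yields a genuine slice of $C$ (hence of $K$) through $x_0$, contained in $\{w>a\}$ and disjoint from $y+V_r$, on which $f$ has small oscillation and hence which has small $\rho$-diameter; Lemma~\ref{bour} then gives $\|w-u\|\le\frac{2}{r}\|x_0-y\|$. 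If you want to salvage your route, you must either prove a convex-hull slice estimate valid for $\rho$ and for $\mathrm{osc}(f;\cdot)$ (unlikely as stated), or replace your slicing lemma by an argument that produces small slices of the auxiliary body $C$ itself located in the prescribed halfspace --- which is what the appeal to~\cite{raja} accomplishes.
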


The organization of the paper is as follows. In the second section we present stability properties of the class $\mathcal{SC}$, which allow us to prove the embedding Theorem~\ref{main1}. The third and fourth sections are devoted to the search of faces and exposed points of continuity, respectively. Finally, a characterization of the class $\mathcal{SC}$ in terms of the existence of a symmetric with countable dentability index is given in Section 5.

\section{Embedding into a dual space}

Along this section $X$ will denote a locally convex topological vector space. Our first goal is to study the properties of following class of compact sets. 

\begin{defi} The class $\mathcal{SC}(X)$ consists of all the nonempty
compact convex subsets $K$ of $X$ such that there exists a function $f\colon K\rightarrow {\mathbb R}$ which is lower semicontinuous and strictly convex. 
In addition, $\mathcal{SC}$ denotes the class composed of all the families $\mathcal{SC}(X)$ for any locally convex space $X$\@.
\end{defi}

Since a lower semicontinuous function on a compact space attains its minimum, the function $f$ is bounded below.
Later we shall show that we may always take $f$ to be bounded. Notice that metrizable convex compacts admits
continuous strictly convex functions, so they are in the class. In particular, if $X$ is metrizable then
$\mathcal{SC}(X)$ contains all the convex compact subsets of $X$\@. If $X$ is a Banach space endowed with its weak
topology, then $\mathcal{SC}(X)$ is made up of all convex weakly compact subsets as a consequence of the strictly
convex renorming results for WCG spaces.

\begin{prop}\label{property} The class $\mathcal{SC}$ satisfies the following stability properties:
\begin{itemize}
\item[a)] $\mathcal{SC}(X)$ is stable by translations and homothetics;
\item[b)] $\mathcal{SC}$ is stable by Cartesian products;
\item[c)] $\mathcal{SC}$ is stable by linear continuous images;
\item[d)] If $A,B \in \mathcal{SC}(X)$, then $A+B \in \mathcal{SC}(X)$\@.
\end{itemize}
\end{prop}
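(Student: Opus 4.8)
The plan is to manufacture, in each case, a lower semicontinuous strictly convex function on the target compact set from the data, and then to deduce (d) formally from (b) and (c). For (a), if $f$ witnesses $K\in\mathcal{SC}(X)$ and $T\colon X\to X$ is a translation, a homothety, or a composition of both, then $T$ is an affine homeomorphism, so $T(K)$ is again compact and convex; I would simply take $g=f\circ T^{-1}$ on $T(K)$. Lower semicontinuity is preserved because $T^{-1}$ is continuous, and strict convexity is preserved because affine maps commute with convex combinations, so that an equality in the convexity inequality for $g$ at $T(x),T(y)$ would pull back to one for $f$ at $x,y$.

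For (b) I would first handle two factors. Given $f_i$ witnessing $K_i\in\mathcal{SC}(X_i)$ for $i=1,2$, set $f(x_1,x_2)=f_1(x_1)+f_2(x_2)$ on $K_1\times K_2\subset X_1\times X_2$. This is lower semicontinuous as a sum of the lower semicontinuous functions $f_i\circ\pi_i$, and it is convex; it is in fact strictly convex, since two distinct points of the product differ in at least one coordinate and the strict convexity of the corresponding $f_i$ makes the convexity inequality strict in that coordinate while the other remains nonstrict. Finitely many factors then follow by iteration (which already suffices for (d)); for countably many factors one normalises the $f_i$ to be nonnegative and bounded, using the boundedness reduction established later, and sums them with weights $2^{-i}$.

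The heart of the proposition is (c), where I expect the main work. Let $f$ witness $K\in\mathcal{SC}(X)$ and let $T\colon X\to Y$ be linear and continuous; then $T(K)$ is compact and convex, but $T$ need not be injective on $K$, so $f$ cannot simply be transported. Instead I would use the infimal projection (marginal function)
\[
  g(y)=\min\{\,f(x): x\in K,\ Tx=y\,\},\qquad y\in T(K),
\]
which is real-valued with the minimum attained, since every fibre $K\cap T^{-1}(y)$ is nonempty and compact and $f$ is lower semicontinuous and bounded below. Convexity and strict convexity come together: given $y_1\ne y_2$ and $\lambda\in(0,1)$, choose minimizers $x_1,x_2$ over $y_1,y_2$; since $\lambda x_1+(1-\lambda)x_2$ lies in the fibre over $\lambda y_1+(1-\lambda)y_2$,
\[
  g(\lambda y_1+(1-\lambda)y_2)\le f(\lambda x_1+(1-\lambda)x_2)\le \lambda f(x_1)+(1-\lambda)f(x_2)=\lambda g(y_1)+(1-\lambda)g(y_2),
\]
which yields convexity; and were both inequalities equalities, strict convexity of $f$ would force $x_1=x_2$, hence $y_1=y_2$, a contradiction, so the inequality is strict and $g$ is strictly convex. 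For lower semicontinuity, the attainment of the minimum gives $\{y\in T(K):g(y)\le c\}=T(\{x\in K:f(x)\le c\})$ for every $c$, and the right-hand side is the continuous image of the compact sublevel set $\{x\in K:f(x)\le c\}$, hence closed. Compactness is exactly what makes both arguments run, and keeping the infimum attained is the delicate point.

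Finally, (d) is immediate from the previous two parts: $A+B$ is the image of $A\times B$ under the continuous linear addition map $s\colon X\times X\to X$, $s(x,y)=x+y$. By (b) one has $A\times B\in\mathcal{SC}(X\times X)$, and then (c) gives $s(A\times B)=A+B\in\mathcal{SC}(X)$.
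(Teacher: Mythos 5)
Your proposal is correct and follows essentially the same route as the paper: transport by the affine homeomorphism for (a), the sum $\sum f_i\circ\pi_i$ for (b), the infimal projection $g(y)=\inf\{f(x):x\in T^{-1}(y)\}$ for (c), and (d) as the image of $A\times B$ under addition. The only differences are that you write out the verification the paper dismisses as ``straightforward'' in (c) (attainment of the minimum, strictness via injectivity on fibres of distinct points, closedness of sublevel sets as continuous images of compacta) and you add a countable-product variant that the paper does not claim.
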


\begin{proof}
 Statement $a)$ is obvious. To prove $b)$ suppose that $f_i$ witnesses $A_i \in \mathcal{SC}(X_i)$
for $i=1,\dots,n$\@. Then $\sum_{i=1}^{n} f_i \circ \pi_i$, where $\pi_i\colon\bigotimes_{i=1}^{n} X_i \rightarrow X_i$
is the coordinate projection, witnesses that $A_1 \times \dots \times A_n \in \mathcal{SC}(\bigotimes_{i=1}^{n} X_i)$\@.

To prove $c)$ assume that $A \in \mathcal{SC}(X)$ and $T\colon X \rightarrow Y$ is linear and continuous.
Obviously $T(A)$ is convex and compact. Let $f\colon A \rightarrow {\mathbb R}$ be lower semicontinuous and strictly convex.
It is straightforward to check that the function $g\colon T(A) \rightarrow {\mathbb R}$ defined by
\[ g(y) =\inf\left\{ f(x): x \in T^{-1}(y) \right\} \]
does the work. Finally, $d)$ follows by a combination of $b)$ and $c)$\@.
\end{proof}

We will need a kind of external convex sum of convex compact sets.

\begin{defi} Given $A,B \subset X$ convex compact define a subset of $X \times X \times {\mathbb R}$ by 
\[  A \oplus B = \left\{ (\lambda x, (1-\lambda) y, \lambda ): x \in A, y \in B, \lambda \in [0,1] \right\}\,.\]
\end{defi}

\begin{lemma}\label{tech}
Let $A,B \subset X$ be convex compact subsets. Then
\begin{itemize}
\item[a)] $A \oplus B$ is a convex compact subset of $X \times X \times {\mathbb R}$;
\item[b)] if $f\colon A \rightarrow {\mathbb R}$ and $g\colon B \rightarrow {\mathbb R}$ are convex, then 
$h\colon A \oplus B \rightarrow {\mathbb R}$  defined by 
\[ h\bigl((\lambda x, (1-\lambda) y, \lambda)\bigr) = \lambda f(x) + (1-\lambda) g(y) \]
is convex as well;
\item[c)] if  $A,B \in \mathcal{SC}(X)$, then  $A \oplus B \in 
\mathcal{SC}(X \times X \times {\mathbb R})$\@.
\end{itemize}
\end{lemma}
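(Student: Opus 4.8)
The plan is to establish the three items in order, reducing (a) and (b) to direct manipulation of the defining parametrisation, while the real work sits in the lower semicontinuity step of (c). For (a), I would realise $A\oplus B$ as a continuous image: the map $\Phi\colon A\times B\times[0,1]\to X\times X\times\mathbb{R}$, $\Phi(x,y,\lambda)=(\lambda x,(1-\lambda)y,\lambda)$, is continuous because scalar multiplication is continuous in a topological vector space, and $A\times B\times[0,1]$ is compact, so $A\oplus B=\Phi(A\times B\times[0,1])$ is compact. For convexity I would take two points $P_i=(\lambda_i x_i,(1-\lambda_i)y_i,\lambda_i)$ and a combination with weight $t\in[0,1]$; the third coordinate forces $\mu:=t\lambda_1+(1-t)\lambda_2$, and for $\mu\in(0,1)$ the first and second coordinates rewrite as $\mu x$ and $(1-\mu)y$, where $x=\frac{t\lambda_1}{\mu}x_1+\frac{(1-t)\lambda_2}{\mu}x_2$ and $y=\frac{t(1-\lambda_1)}{1-\mu}y_1+\frac{(1-t)(1-\lambda_2)}{1-\mu}y_2$ are convex combinations (the barycentric coefficients sum to $1$ in each case). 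Then $x\in A$, $y\in B$ by convexity. The degenerate values $\mu\in\{0,1\}$, which force the relevant $\lambda_i$ to vanish, have to be checked separately, but there the point collapses to one of the form $(0,\cdot,0)$ or $(\cdot,0,1)$ and lies in $A\oplus B$ at once.

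For (b) I would first observe that $h$ is well defined: for $\lambda\in(0,1)$ the representation $(\lambda x,(1-\lambda)y,\lambda)$ determines $\lambda,x,y$ uniquely, while for $\lambda=0$ (resp. $\lambda=1$) the undetermined variable $x$ (resp. $y$) enters $h$ with coefficient $0$, so the value is unambiguous. Convexity is then the computation of (a) carried one line further: with the notation above, convexity of $f$ gives $\mu f(x)\le t\lambda_1 f(x_1)+(1-t)\lambda_2 f(x_2)$ and convexity of $g$ gives $(1-\mu)g(y)\le t(1-\lambda_1)g(y_1)+(1-t)(1-\lambda_2)g(y_2)$; adding these and regrouping the right-hand side by $t$ and $1-t$ yields exactly $h(tP_1+(1-t)P_2)\le t\,h(P_1)+(1-t)\,h(P_2)$.

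For (c), starting from strictly convex lower semicontinuous $f,g$, the candidate cannot be $h$ itself: along a segment joining $(\lambda_1 x_0,(1-\lambda_1)y_0,\lambda_1)$ and $(\lambda_2 x_0,(1-\lambda_2)y_0,\lambda_2)$ with common $x_0,y_0$ but $\lambda_1\neq\lambda_2$, one computes that $h$ is affine. I would therefore set $H=h+\pi_3^{\,2}$, where $\pi_3\colon X\times X\times\mathbb{R}\to\mathbb{R}$ is the third-coordinate projection; since $\pi_3$ is affine, $\pi_3^{\,2}$ is convex and continuous and supplies strict convexity exactly in the missing $\lambda$-direction. To see $H$ strictly convex, I would suppose $H$ affine on a segment $[P_1,P_2]$. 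Both convex summands must then be affine there; affinity of $\pi_3^{\,2}$ has leading coefficient $(\lambda_1-\lambda_2)^2$ in the segment parameter, forcing $\lambda_1=\lambda_2=:\lambda$. With $\lambda$ common, affinity of $h$ reads $\lambda f(tx_1+(1-t)x_2)+(1-\lambda)g(ty_1+(1-t)y_2)$ affine in $t$; for $\lambda\in(0,1)$ strict convexity of $f$ and $g$ forces $x_1=x_2$ and $y_1=y_2$, and the endpoints $\lambda\in\{0,1\}$ (where only $g$, respectively only $f$, is involved) are handled the same way, so $P_1=P_2$.

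The main obstacle is the lower semicontinuity of $H$, equivalently of $h$, as $\pi_3^{\,2}$ is continuous. I would prove it by lifting to the parametrising compact: the function $\tilde h(x,y,\lambda)=\lambda f(x)+(1-\lambda)g(y)$ on $A\times B\times[0,1]$ satisfies $h\circ\Phi=\tilde h$ by the very definition of $h$, and is therefore constant on the fibres of $\Phi$. Subtracting the (finite) minima of $f$ and $g$, each summand of $\tilde h$ becomes the product of a nonnegative continuous function and a nonnegative lower semicontinuous function; such a product is lower semicontinuous, since for $c\ge 0$ and a point with value exceeding $c$ one finds thresholds $p<u$, $q<v$ with $pq>c$, whence $\{u>p\}\cap\{v>q\}$ is an open neighbourhood inside $\{uv>c\}$. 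Thus $\tilde h$ is lower semicontinuous. Finally $\Phi$ is a continuous surjection between compact Hausdorff spaces, hence a closed quotient map, so a sublevel set $\{h\le c\}$ is closed precisely when its preimage $\{\tilde h\le c\}$ is closed; this transfers lower semicontinuity from $\tilde h$ to $h$, and $H=h+\pi_3^{\,2}$ is lower semicontinuous. The two points requiring care, namely that $f$ may be unbounded above and that $\Phi$ fails to be injective over $\lambda\in\{0,1\}$, are absorbed respectively by the product lemma and by the constancy of $\tilde h$ on fibres, completing $A\oplus B\in\mathcal{SC}(X\times X\times\mathbb{R})$.
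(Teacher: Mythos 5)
Your proof is correct, and for the parts the paper actually writes out it follows the same route: the explicit barycentric regrouping to get convexity of $A\oplus B$ and of $h$ (you do general weights $t$, the paper only midpoints, which is an inessential difference), and the same perturbation $h+\lambda^2$ to recover strict convexity in the $\lambda$-direction, with the same case analysis showing that $h$ alone is merely affine along segments with fixed $x_0,y_0$. Where you genuinely go beyond the paper is the lower semicontinuity of $h$: the paper's proof of part c) never addresses it, whereas you correctly identify it as the delicate point and supply a complete argument --- normalise $f,g$ to be nonnegative by subtracting their (attained) minima, use that a product of a nonnegative continuous function with a nonnegative lower semicontinuous function is lower semicontinuous to get lower semicontinuity of the lift $\tilde h$ on $A\times B\times[0,1]$, and then push it down through $\Phi$, which is a closed quotient map as a continuous surjection from a compact space onto a Hausdorff one, so that $\{h\le c\}$ is closed exactly when $\{\tilde h\le c\}$ is. This buys a fully rigorous proof of c) at the cost of the extra quotient-map machinery; the paper's version leaves the reader to supply precisely this step.
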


\begin{proof} Compactness is clear in statement $a)$\@. Given 
$(\lambda_i x_i, (1-\lambda_i) y_i, \lambda _i) \in A \oplus B$ for $i=1,2$, just observe that
\begin{align*}
\lefteqn{\left(\frac{\lambda_1 x_1+\lambda_2 x_2}{2}, \frac{(1-\lambda_1) y_1+(1-\lambda_2) y_2}{2}, \frac{\lambda_1+\lambda_2}{2} \right)} \\
 &= \left(\frac{\lambda_1+\lambda_2}{2} \, \frac{\lambda_1 x_1 + \lambda_2 x_2}{\lambda_1 + \lambda_2}, \bigl(1-\frac{\lambda_1+\lambda_2}{2}\bigr) \frac{(1-\lambda_1) y_1 +(1- \lambda_2) y_2}{(1-\lambda_1) + (1 - \lambda_2)}, \frac{\lambda_1+\lambda_2}{2} \right)
\end{align*}
(the case where $\lambda_1=\lambda_2=0,1$ can be handed in a different way). Thus, $A \oplus B$ is convex. For the convexity of function $h$ notice that
\begin{align*}
 \lefteqn{h\bigl((\frac{\lambda_1 x_1+\lambda_2 x_2}{2}, \frac{(1-\lambda_1) y_1+(1-\lambda_2) y_2}{2}, \frac{\lambda_1+\lambda_2}{2}) \bigr)}  \\
 &= \frac{\lambda_1+\lambda_2}{2} f\bigl(\frac{\lambda_1 x_1 + \lambda_2 x_2}{\lambda_1 + \lambda_2}\bigr) +  \left(1-\frac{\lambda_1+\lambda_2}{2}\right) \, g\bigl(\frac{(1-\lambda_1) y_1 +(1- \lambda_2) y_2}{(1-\lambda_1) + (1 - \lambda_2)} \bigr) \\
 &\leq  \frac{\lambda_1+\lambda_2}{2} \, \frac{\lambda_1 f(x_1) + \lambda_2 f(x_2)}{\lambda_1 + \lambda_2} +  \left(1-\frac{\lambda_1+\lambda_2}{2}\right) \, \frac{(1-\lambda_1) g(y_1) +(1- \lambda_2) g(y_2)}{(1-\lambda_1) + (1 - \lambda_2)}  \\
 &= \frac{1}{2} \left(h\bigl((\lambda_1 x_1, (1-\lambda_1) y_1, \lambda _1)\bigr) + h\bigl((\lambda_2 x_2, (1-\lambda_2) y_2, \lambda _2)\bigr)\right)\,.
\end{align*}
If $f$ and $g$ were strictly convex, the above inequality for $h$ would become strict if $x_1 \not = x_2$ or $y_1 \not  = y_2$\@. To overcome this difficulty consider the function 
\[ k\bigl((\lambda x, (1-\lambda) y, \lambda)\bigr) =
h\bigl((\lambda x, (1-\lambda) y, \lambda)\bigr) + \lambda^2 \]
and notice that $\lambda^2$ provides the strict inequality when $x_1=x_2$ and $y_1=y_2$\@.
\end{proof}

\begin{prop}
Suppose that $A,B \in \mathcal{SC}(X)$\@. Then $\mbox{conv}(A \cup B) \in \mathcal{SC}(X)$ and
$\mbox{aconv}(A) \in \mathcal{SC}(X)$\@.
\end{prop}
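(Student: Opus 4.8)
The plan is to realize both hulls as continuous linear images of the external sum $A \oplus B$ from Lemma~\ref{tech}, so that the result drops out by combining that lemma with the stability of $\mathcal{SC}$ under continuous linear images (Proposition~\ref{property}~$c)$).

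First I would record the elementary description of the convex hull of a union of two convex sets: since $A$ and $B$ are convex,
\[ \mbox{conv}(A \cup B) = \left\{ \lambda x + (1-\lambda) y : x \in A,\ y \in B,\ \lambda \in [0,1] \right\}. \]
Comparing this with the definition of $A \oplus B$, I observe that the continuous linear map $T\colon X \times X \times {\mathbb R} \rightarrow X$ given by $T(u,v,t) = u+v$ sends $A \oplus B$ exactly onto $\mbox{conv}(A \cup B)$. By Lemma~\ref{tech}~$c)$ we have $A \oplus B \in \mathcal{SC}(X \times X \times {\mathbb R})$, and by Proposition~\ref{property}~$c)$ the class $\mathcal{SC}$ is preserved under continuous linear images; hence $\mbox{conv}(A \cup B) = T(A \oplus B) \in \mathcal{SC}(X)$, which settles the first assertion.

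For the absolutely convex hull I would reduce to the first part. Multiplication by $-1$ is a homothety, so $-A \in \mathcal{SC}(X)$ by Proposition~\ref{property}~$a)$, and it suffices to check that $\mbox{aconv}(A) = \mbox{conv}(A \cup (-A))$. The set $C := \mbox{conv}(A \cup (-A))$ is convex and symmetric; being nonempty and symmetric it contains $0 = \frac12 z + \frac12(-z)$ for any $z \in C$, and then for $t \in [0,1]$ one has $tz = t z + (1-t)\cdot 0 \in C$, so $C$ is balanced. Thus $C$ is absolutely convex and contains $A$, while any absolutely convex set containing $A$ already contains $-A$ and hence $\mbox{conv}(A \cup (-A)) = C$; therefore $C$ is the smallest such set, i.e.\ $C = \mbox{aconv}(A)$. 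Applying the first part to $A$ and $-A$ now gives $\mbox{aconv}(A) \in \mathcal{SC}(X)$.

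I do not expect a genuine obstacle here: once $A \oplus B$ is in hand, the entire argument rests on recognizing the addition map as the right projection and on two elementary set identities. The only mildly delicate point is the identity $\mbox{aconv}(A) = \mbox{conv}(A \cup (-A))$, where one must confirm balancedness of $\mbox{conv}(A \cup (-A))$; this hinges on the observation that symmetry forces $0 \in C$, after which convexity does the rest.
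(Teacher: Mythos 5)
Your proof is correct and follows essentially the same route as the paper: realize $\mbox{conv}(A\cup B)$ as the image of $A\oplus B$ under the continuous linear map $(u,v,t)\mapsto u+v$ and then specialize to $B=-A$. The extra verification that $\mbox{aconv}(A)=\mbox{conv}(A\cup(-A))$ (via symmetry forcing $0\in C$ and hence balancedness) is a detail the paper leaves implicit, and it is handled correctly.
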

\begin{proof} Consider the map $T\colon X \times X \times {\mathbb R} \rightarrow X$ defined by
$  T\bigl((x,y,t)\bigr) = x+y $ and  observe that
$T(A \oplus B)= \mbox{conv}(A \cup B)$\@. Since $T$ is linear and continuous, the combination of the previous results gives us that $\mbox{conv}(A \cup B)  \in \mathcal{SC}(X)$\@. The application to the symmetric convex hull follows by applying it with $B=-A$\@.
\end{proof}

\begin{lemma}\label{imbed}
Let $B \subset X$ be a symmetric compact convex set and let $Z = \mbox{span}(B)$\@. Then the following hold:
\begin{itemize}
\item[a)] $Z$, with the norm given by the Minkowski functional of $B$, is isometric to a dual Banach space;
\item[b)] $B$ imbeds linearly into $(Z,w^*)$\@;
\item[c)] if $f\colon X \rightarrow {\mathbb R}$ is convex and lower semicontinuous, then $f|_Z$ is weak* lower semicontinuous.
\end{itemize}
\end{lemma}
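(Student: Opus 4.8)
The plan is to exhibit an explicit predual for $(Z,\|\cdot\|)$, where $\|\cdot\|=p_B$ is the Minkowski functional of $B$. This is a genuine norm on $Z=\mbox{span}(B)=\bigcup_n nB$, because the compact set $B$ is bounded, balanced and contains no line, so that its closed unit ball is exactly $B$. The candidate predual is $F:=\overline{\{x^*|_Z:x^*\in X^*\}}$, the closure taken inside the dual Banach space $(Z,\|\cdot\|)^*$ of the restrictions to $Z$ of the continuous linear functionals on $X$; as a closed subspace of a dual, $F$ is itself a Banach space. I would then prove $a)$, $b)$ and $c)$ at once by showing that the canonical evaluation $j\colon Z\to F^*$, $j(z)(f)=f(z)$, is a linear isometry of $(Z,\|\cdot\|)$ \emph{onto} $F^*$ that carries $B$ homeomorphically onto $(B_{F^*},w^*)$.

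The cornerstone is that on the $\tau$-compact set $B$ the original topology agrees with the weak topology. Indeed, since $X$ is Hausdorff, $\sigma(X,X^*)$ is a coarser Hausdorff topology than $\tau$, so the identity $(B,\tau)\to(B,\sigma(X,X^*))$ is a continuous bijection from a compact space onto a Hausdorff space, hence a homeomorphism, giving $\tau|_B=\sigma(X,X^*)|_B$. Because $F$ contains $\{x^*|_Z\}$ as a dense subspace and $j(B)$ is norm bounded, the $w^*$-topology of $F^*$ restricted to $j(B)$ is precisely the topology of pointwise convergence against $X^*$, that is, $\sigma(X,X^*)|_B=\tau|_B$. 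Thus $j\colon(B,\tau)\to(j(B),w^*)$ is a homeomorphism, which already yields $b)$ once $Z$ is identified with $F^*$, and which shows that $j(B)$ is $w^*$-compact.

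For $a)$, the inequality $\|j(z)\|_{F^*}\le p_B(z)$ is immediate. Equality follows from Hahn--Banach separation in $X$: any $z_0\in Z\setminus B$ can be separated from $B$ by some $x^*\in X^*$, and setting $c=\sup_{b\in B}|\langle x^*,b\rangle|$ one checks $c>0$ (otherwise $x^*$ would vanish on $\mbox{span}(B)\ni z_0$), so $x^*|_Z/c$ lies in the unit ball of $F$ and shows $z_0$ is not in the polar of $B_F$; hence $B_F$ is norming and $j$ is isometric. The main obstacle is surjectivity, and this is exactly where compactness is indispensable. Here I would argue that $j(B)$ is a $w^*$-compact, convex, symmetric subset of $B_{F^*}$: if there were some $\phi\in B_{F^*}\setminus j(B)$, Hahn--Banach separation in $(F^*,w^*)$, whose dual is $F$, would produce $f\in F$ with $\langle\phi,f\rangle>\sup_{z\in B}f(z)$, and by symmetry of $B$ the right-hand side equals $\|f\|_F$, contradicting $\|\phi\|_{F^*}\le1$. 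Therefore $j(B)=B_{F^*}$, whence $j(Z)=\bigcup_n nB_{F^*}=F^*$ and $j$ is an isometric isomorphism; in particular $(Z,\|\cdot\|)$ is complete and is a dual Banach space, proving $a)$.

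Finally, $c)$ is a soft consequence of convex duality. A real-valued convex lower semicontinuous function on the locally convex space $X$ is the pointwise supremum of its $\tau$-continuous affine minorants, each of the form $x\mapsto\langle x^*,x\rangle+t$ with $x^*\in X^*$ and $t\in\mathbb{R}$; this is the Fenchel--Moreau representation, obtained by separating points lying below the closed convex epigraph of $f$. Restricting to $Z$ and identifying $Z$ with $F^*$, each such affine function becomes $z\mapsto\langle x^*|_Z,z\rangle+t$ with $x^*|_Z\in F$, hence $w^*$-continuous. Being a supremum of $w^*$-continuous functions, $f|_Z$ is $w^*$-lower semicontinuous, as required.
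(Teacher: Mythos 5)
Your proof is correct, but it takes a genuinely different route from the paper's. Where you build the predual explicitly --- taking $F$ to be the norm-closure in $(Z,\norm)^*$ of the restrictions $x^*|_Z$, $x^*\in X^*$, and verifying by hand that the evaluation map $j\colon Z\to F^*$ is an onto isometry carrying $(B,\tau)$ homeomorphically onto $(B_{F^*},w^*)$ --- the paper simply invokes the Dixmier\textendash{}Ng theorem, whose canonical predual $W$ is the (a priori larger) space of \emph{all} linear functionals on $Z$ that are $\tau$-continuous on $B$. The two preduals induce the same topology on $B$ (both coincide with $\tau|_B$ by your compact-to-Hausdorff argument), hence by Banach\textendash{}Dieudonn\'e the same weak$^*$-closed convex sets, so the statements are interchangeable for everything the lemma is used for. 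For part $c)$ the divergence is sharper: the paper works with sublevel sets, noting that $\{f\le a\}\cap nB$ is $\tau$-compact, hence weak$^*$ compact, and concludes that $\{f|_Z\le a\}$ is weak$^*$ closed by Banach\textendash{}Dieudonn\'e; you instead invoke the Fenchel\textendash{}Moreau representation of $f$ as the supremum of its $\tau$-continuous affine minorants and restrict each minorant to $Z$. Both are sound (your use of Fenchel\textendash{}Moreau is legitimate because $f$ is finite everywhere, which rules out the vertical-hyperplane case in the separation of the epigraph); the paper's argument is more elementary in that it needs only closedness of sublevel sets, while yours is more self-contained in that it never cites Dixmier\textendash{}Ng. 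Two small imprecisions worth tightening: the reason $p_B$ is a norm is that $B$ is bounded in the Hausdorff space $X$ (not that it ``contains no line''), and the identification $\{p_B\le 1\}=B$ uses that the compact set $B$ is $\tau$-closed together with a limit along rays; neither affects the validity of the argument.
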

\begin{proof} Notice that $Z=\bigcup_{n=1}^{\infty} n B$, and thus the Minkowski functional of $B$ is a norm on 
$Z$\@. Of course, $B$ is the unit ball of $Z$ endowed with this norm. By a result of Dixmier-Ng, see for instance~\cite{NG}, the space $Z$ is isometric to the dual of the Banach space $W$ of all linear functionals $f$ on $Z$ such that $f|_B$ is $\tau$-continuous. 
If $f\colon X \rightarrow {\mathbb R}$ is convex and lower semicontinuous, then the sets $\{f \leq a\}$ are convex
and closed for any $a \in {\mathbb R}$\@. We have $\{f|_Z \leq a\}= \{f \leq a\} \cap Z$, and thus 
$\{f|_Z \leq a\} \cap nB= \{f \leq a\} \cap nB$ is compact, and so it is weak* compact as subset of $Z$
for every $n \in {\mathbb N}$\@.
By the Banach-Dieudonn\'e theorem, $\{f|_Z \leq a\}$ is a weak* closed subset of $Z$\@.
\end{proof}

\begin{proof}[Proof of Theorem~\ref{main1}]
Let $B = \mbox{aconv}(K)$ which is in $\mathcal{SC}(X)$\@. 
The function $f$ witnessing that
$B \in \mathcal{SC}(X)$ is weak* lower semicontinuous and strictly convex.
By Lemma~\ref{imbed} we only need to renorm the dual space $Z$\@. 
Notice that the function $f$ can be taken symmetric and bounded. 
Indeed, for the symmetry just take $g(x)=f(x)+f(-x)$\@.
Now apply the Baire theorem to the $B = \bigcup_{n=1}^{\infty} g^{-1}((-\infty,n])$ 
to obtain a set of the form $\lambda B$ with $\lambda>0$ where $g$ is bounded. 
Then redefine $f$ as  $f(x)=g(\lambda x)$\@.

Without loss of generality we may assume that $f$ takes values in $[0,1]$\@. Consider the function defined on $B_Z$ by
\[ h(x)=\frac{1}{2}( 3\|x\| + f(x) ) \]
and consider the set $C=\{x \in B_Z: h(x) \leq 1 \}$\@. 
Clearly $\frac{1}{3}B_Z \subset C \subset \frac{2}{3}B_Z$, and $C$ is convex, symmetric and weak* closed. Moreover, if $h(x)=h(y)=1$, then $h\bigl(\frac{x+y}{2}\bigr)<1$\@.
Therefore, $C$ is the unit ball of an equivalent strictly convex dual norm on $Z$\@.
\end{proof}

\begin{coro} If $K\in\mathcal{SC}(X)$, then it is witnessed by the square of a lower semicontinuous strictly convex norm defined on $\mbox{span}(K)$\@.
\end{coro}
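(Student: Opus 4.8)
The plan is to read the desired norm off the proof of Theorem~\ref{main1} and then check that its square does the job. Recall that in that proof one sets $B=\mbox{aconv}(K)$, so that $B\in\mathcal{SC}(X)$ is compact, convex and symmetric, and $Z=\mbox{span}(B)=\mbox{span}(K)$ becomes, via Lemma~\ref{imbed}, a dual Banach space whose unit ball is $B$ and on which a symmetric, bounded, strictly convex and weak*-lower semicontinuous witness $f\colon B\to[0,1]$ is available. The construction there produces the unit ball $C=\{x\in B:\tfrac12(3\|x\|+f(x))\le 1\}$ of an equivalent strictly convex dual norm on $Z$. I would take precisely this norm as the candidate, write it $N$, and propose $N^2$ as the witness.

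Then I would verify two things. First, that $N^2$ is strictly convex as a function. This is the routine chain $N(tx+(1-t)y)^2\le (tN(x)+(1-t)N(y))^2\le tN(x)^2+(1-t)N(y)^2$, in which the first inequality is the triangle inequality and the second is the convexity of $s\mapsto s^2$. For $x\ne y$ one of the two steps is strict: if $N(x)\ne N(y)$, the strict convexity of $s\mapsto s^2$ makes the second step strict; if $N(x)=N(y)=r>0$, the strict convexity of $N$ (already established in the proof of Theorem~\ref{main1}, where $h(x)=h(y)=1$ forces $h(\tfrac{x+y}{2})<1$) makes the first step strict. Second, I would check the lower semicontinuity of $N$ for the topology $\tau$ of $X$ restricted to $Z$, whence that of $N^2=(\,\cdot\,)^2\circ N$.

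The only genuinely delicate point is this last lower semicontinuity, because $N$ is manufactured to be weak*-lower semicontinuous on $Z$ (its unit ball $C$ is weak*-closed), whereas membership in $\mathcal{SC}(X)$ requires lower semicontinuity for the ambient topology $\tau$. I would resolve it using Lemma~\ref{imbed}(b): the identity map $(B,\tau)\to(B,w^*)$ is a linear homeomorphism of compact sets, so $\tau$ and $w^*$ agree on $B$ and, by homogeneity, on every bounded set $nB$. Now each sublevel set $aC=\{x\in Z: N(x)\le a\}$ is contained in $\tfrac{2a}{3}B$, hence bounded, and is weak*-closed, hence weak*-compact; since the two topologies coincide on this bounded set, $aC$ is $\tau$-compact and therefore $\tau$-closed in $Z$. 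Thus $N$ is $\tau$-lower semicontinuous on $Z=\mbox{span}(K)$.

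Finally I would assemble the pieces: $N$ is a strictly convex norm on $\mbox{span}(K)$ that is lower semicontinuous for $\tau$, and its square $N^2$ is strictly convex and $\tau$-lower semicontinuous, so that $N^2|_K$ witnesses $K\in\mathcal{SC}(X)$, which is exactly the claim. I expect the identification of the norm and the strict-convexity computation to be immediate, and the coincidence of $\tau$ with $w^*$ on bounded sets to be the one step that must be stated with care.
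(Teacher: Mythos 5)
Your proposal is correct and is exactly the argument the paper intends: the corollary is stated without proof immediately after Theorem~\ref{main1} precisely because the equivalent strictly convex dual norm with unit ball $C$ constructed there already does the job on $Z=\mbox{span}(\mbox{aconv}(K))=\mbox{span}(K)$. Your two verifications --- that the square of a strictly convex norm is a strictly convex function, and that $\tau$-lower semicontinuity follows from Lemma~\ref{imbed}(b) because $\tau$ and $w^*$ coincide on each bounded set $nB$ and the sublevel sets $aC$ are bounded and weak$^*$-closed --- are precisely the details the authors leave to the reader, and you have identified correctly that the second one is the only point requiring care.
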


We shall finish this section by showing the connection between the class $\mathcal{SC}$ and $(\ast)$ property. The following notion was introduced in~\cite{OST} in order to characterize dual Banach spaces that admit a dual strictly convex norm:

\begin{defi} A compact space $K$ is said to have $(\ast)$ if there exists a sequence $(\mathcal{U}_n)_{n=1}^\infty$ of families of open subsets of $K$ such that, given any $x,y\in K$, there exists $n\in\N$ such that:
	\begin{itemize}
	\item[a)] $\{x,y\}\cap\bigcup \mathcal{U}_n$ is non-empty;
	\item[b)] $\{x,y\}\cap U$ is at most a singleton for every $U\in\mathcal{U}_n$\@.
	\end{itemize}
\end{defi}
	Here we are using the agreement that $\bigcup \mathcal{U}_n=\bigcup \{U: U \in \mathcal{U}_n \}$\@. Recall that if $K$ is a subset of a locally convex topological vector space then a slice of $K$ is an intersection of $K$ with an open halfspace. If the elements of $\bigcup_{n=1}^\infty\mathcal{U}_n$ can be taken to be slices of $K$, then $K$ is said to have $(\ast)$ \emph{with slices}. It is shown in~\cite[Theorem 2.7]{OST} that if $Z$ is a dual Banach space then $(B_{Z},w^*)$ has $(\ast)$ with slices if and only if $Z$ admits a dual strictly convex norm. 

\begin{coro}\label{SCstar} Let $(X,\tau)$ be locally convex topological vector space and $K\subset X$ be compact and convex. Then $K\in \mathcal{SC}(X)$ if and only if $K$ has $(\ast)$ with slices. 
\end{coro}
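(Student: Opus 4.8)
The plan is to deduce the corollary from the embedding Theorem~\ref{main1} together with the characterization of dual strictly convex renormability in \cite[Theorem 2.7]{OST}, transporting the $(\ast)$ property between $K$ and the dual ball that Lemma~\ref{imbed} attaches to it. First I would fix the dual realization: set $B=\mbox{aconv}(K)$ and $Z=\mbox{span}(B)$, so that by Lemma~\ref{imbed} the space $Z$ is a dual Banach space, $B=B_Z$ is its unit ball, and $K\subset B_Z$ imbeds linearly into $(Z,w^*)$. Since $K$ is $\tau$-compact and the functionals of the predual $W$ are $\tau$-continuous on $B$, the identity is a continuous bijection from the compact space $(K,\tau)$ onto the Hausdorff space $(K,w^*)$, hence a homeomorphism; thus $w^*$ and $\tau$ agree on $K$ and lower semicontinuity on $K$ is one and the same notion for either topology. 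This is exactly what will let me carry a function produced on the dual side back to $(X,\tau)$.

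With this in place the chain is: $K\in\mathcal{SC}(X)$ if and only if the space $Z$ (which is the space renormed in the proof of Theorem~\ref{main1}) admits a strictly convex dual norm, if and only if, by \cite[Theorem 2.7]{OST}, $(B_Z,w^*)=(\mbox{aconv}(K),w^*)$ has $(\ast)$ with slices. The corollary therefore reduces to the purely geometric equivalence: $\mbox{aconv}(K)$ has $(\ast)$ with slices if and only if $K$ has $(\ast)$ with slices.

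For the forward implication of that equivalence I would simply restrict: if $(\mathcal{U}_n)_n$ witnesses $(\ast)$ with slices for $B_Z$, then the families $\{U\cap K:U\in\mathcal{U}_n\}$ consist of slices of $K$, and since $x,y\in K\subset B_Z$ forces $\{x,y\}\cap\bigcup\{U\cap K\}=\{x,y\}\cap\bigcup\mathcal{U}_n$, they still witness $(\ast)$ for every pair in $K$. The one delicate point is which functionals cut the slices: the ones I obtain come from the predual $W$, whereas the statement asks for slices of $K$ in $(X,\tau)$, i.e. for functionals in $X^*$. As $X^*|_Z\subset W$ and the continuous affine functions on the compact convex set $K$ are the uniform limits of $\{(\phi+c)|_K:\phi\in X^*\}$, I would insert a short lemma stating that $(\ast)$ with slices is unchanged if the defining functionals are replaced by a uniformly dense subspace, which lets me refine each $W$-slice to an $X^*$-slice at the cost of a harmless shrinking.

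The real work is the converse, where I expect the main obstacle to lie: from $(\ast)$ with slices on $K$ I must produce it on the larger symmetric set $\mbox{aconv}(K)=\mbox{conv}(K\cup(-K))$, and slices of an absolutely convex hull mix points of $K$ and $-K$ and do not restrict to slices of $K$ in any transparent way. I see two routes. The first is to prove stability of $(\ast)$ with slices under the passage to $\mbox{aconv}$ directly, manufacturing the witnessing families for $\mbox{conv}(K\cup(-K))$ from those of $K$ and $-K$ by separating pairs according to sign and to (rational) barycentric weight, in the spirit of the external sum $A\oplus B$ of Lemma~\ref{tech}. The second, which avoids $\mbox{aconv}$ entirely, is to run the construction underlying the hard half of \cite[Theorem 2.7]{OST} not on a ball but on the $w^*$-compact convex set $K$ itself: the series assembled from the slice-families $(\mathcal{U}_n)_n$ produces a $w^*$-lower semicontinuous strictly convex function on $K$ without any use of homogeneity, and by the first paragraph this function is $\tau$-lower semicontinuous, giving $K\in\mathcal{SC}(X)$ at once. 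I would present the second route as the cleaner one, flagging the stability of $(\ast)$ with slices under $\mbox{aconv}$ as the technical heart if one prefers to quote \cite[Theorem 2.7]{OST} verbatim.
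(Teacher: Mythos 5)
Your proposal is correct, but for the implication ``$K\in\mathcal{SC}(X)\Rightarrow K$ has $(\ast)$ with slices'' it takes a genuinely different and heavier route than the paper. On the converse implication you essentially coincide with the paper: it passes to $Z=\mbox{span}(\mbox{aconv}(K))$ via Lemma~\ref{imbed} and quotes \cite[Proposition 2.2]{OST}, which is exactly your preferred ``route 2'' (running the construction behind the hard half of \cite[Theorem 2.7]{OST} on the compact convex set $K$ itself); your ``route 1'', the stability of $(\ast)$ with slices under $\mbox{aconv}$, which you flag as the technical heart, is therefore never needed and can be discarded. For the forward implication the paper invokes neither Theorem~\ref{main1} nor \cite[Theorem 2.7]{OST}: given a strictly convex lower semicontinuous witness $\phi$, it considers, for rational $q,r$, the families of all slices $\{f>r\}\cap K$ with $f\in(X,\tau)^*$ that are disjoint from the closed convex sublevel set $C_q=\{\phi\le q\}$; for $x\ne y$ with $\phi(x)\le\phi(y)$, strict convexity gives $\phi\bigl(\frac{x+y}{2}\bigr)<q<\phi(y)$ for some rational $q$, Hahn--Banach separation of $y$ from $C_q$ produces a slice of the family containing $y$, and since a slice containing both $x$ and $y$ would contain their midpoint, no member of the family contains both. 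This manufactures the separating slices directly with functionals of $(X,\tau)^*$, so the difficulty you rightly identify --- that \cite[Theorem 2.7]{OST} hands you slices cut by the predual $W$ rather than by $X^*$ --- simply does not arise. Your detour is sound (the identification of $\tau$ with $w^*$ on $K$, the uniform density of $X^*|_K+\R$ in the continuous affine functions on $K$, and the refinement of each $W$-slice by the $X^*$-slices it contains all go through), but it costs two auxiliary lemmas and the full strength of the renorming theorem where a short Hahn--Banach argument from the sublevel sets of $\phi$ suffices.
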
 

\begin{proof} By Lemma~\ref{imbed} we may assume that $K \subset Z=\mbox{span}(K)$ has $(\ast)$ with weak* slices. It follows from~\cite[Proposition 2.2]{OST} that then there is a lower semicontinuous strictly convex function defined on $K$\@. On the other hand, assume that $\phi$ witnesses $K\in \mathcal{SC}(X)$\@. For $f\in(X,\tau)^\ast$ and $r\in\R$, denote $S(f,r)= \{x\in K : f(x)>r\}$\@. Consider the families $\{\mathcal{U}_{qr}\}_{q,r\in\Q}$ of open subsets given by
\[ \mathcal{U}_{qr} =  \left\{S(f,r) : f\in (X,\tau)^\ast, S(f,r)\cap\{x:\phi(x)\leq q\}=\emptyset\right\}\,. \]
Let $x\neq y$ be in $K$\@. We may assume that $\phi(x)\leq \phi(y)$\@. Since $\phi$ is strictly convex, there exists $q\in \Q$ such that $\phi(\frac{x+y}{2})<q<\phi(y)$\@. By the Hahn\textendash{}Banach theorem, there is $f\in(X,\tau)^\ast$ and $r\in \Q$ such that $\sup\{f(z): \phi(z)\leq q\} < r < f(y)$\@. Therefore, $S(f,r)\cap\{z:\phi(z)\leq q\}=\emptyset$ and $\{x,y\}\cap \bigcup \mathcal{U}_{qr} \neq \emptyset$\@.

Suppose that $x,y\in S(g,r) \in \mathcal{U}_{qr}$\@. Then $g(x),g(y)>r$ implies $g\bigl(\frac{x+y}{2}\bigr)>r$\@. Hence $\frac{x+y}{2}\notin\{z:\phi(z)\leq q\}$, a contradiction. So $\{x,y\}\cap S(g,q)$ is at most a singleton for each $S(g,q)\in \mathcal{U}_{qr}$\@. 
\end{proof}

\section{Faces of continuity}

We will assume along the section that $Z=W^*$ is a dual Banach space endowed with the weak$^*$ topology.
Therefore any unspecified topological concept (compact, open, \dots) is always referred to the weak$^*$ topology.
The elements of $W$ will be considered as functionals on $Z$\@. Other topological ingredient that we will use
is a symmetric $\rho\colon Z \times Z \rightarrow [0,+\infty)$\@. Recall that a symmetric satisfies $\rho(x,y)=\rho(y,x)$
and $\rho(x,y)=0$ if and only if $x=y$\@. Since a symmetric does not satisfy the triangle inequality, its associated topology is complicated to handle. Nevertheless we have a natural notion of diameter associated to $\rho$ defined by
\[ \rho \mbox{-diam}(A)= \sup\{ \rho(x,y): x,y \in A \} \]

Let us recall the definition of face of a convex set.

\begin{defi}
Let $C \subset Z$ be closed and convex. We say that a closed subset $F \subset C$ is a face if 
there is a continuous affine function $w\colon C \rightarrow {\mathbb R}$ such that 
\[F=\{x \in C: w(x)=\sup\{w,C\}\}\,.\]
In that case we say that the face is produced by $w$\@. In addition, we say that a point $x\in C$ is a exposed point of $C$ if $\{x\}$ is a face of $C$\@. 
\end{defi}

Sometimes the face is produced by an element of the dual. Nevertheless, there may exist continuous affine functions on $C$ that are not the restriction of an element of the dual. 

We shall need the following lemma.

\begin{lemma}[Lemma 3.3.3 of~\cite{bourgin}]\label{bour}
Suppose that $w \in W$ and $\|w\|=1$\@. For $r>0$ denote by $V_r$ the set $rB_Z \cap w^{-1}(0)$\@. Assume
that $x_0$ and $y$ are points of $Z$ such that $w(x_0)>w(y)$ and $\|x_0-y\| \leq r/2$\@. If $u \in W$ satisfies
that $\|u\|=1$ and $u(x_0) > \sup\{u,y+V_r\}$, then $\|w-u\| \leq \frac{2}{r} \|x_0-y\|$\@.
\end{lemma}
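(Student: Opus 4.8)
The plan is to reduce the whole estimate to the control of a single scalar, the ``component of $u$ along $w$'', obtained from a Hahn--Banach decomposition. Write $d=x_0-y$, so the hypotheses read $w(d)>0$ and $\|d\|\le r/2$. Since $\sup\{u,y+V_r\}=u(y)+\sup\{u(v):v\in V_r\}$, the exposition condition $u(x_0)>\sup\{u,y+V_r\}$ becomes $u(d)>M$, where $M:=\sup\{u(v):v\in V_r\}$. Because $V_r=rB_Z\cap w^{-1}(0)$ is balanced inside the hyperplane $w^{-1}(0)$, scaling gives $M=r\,m$ with $m:=\|u|_{w^{-1}(0)}\|$. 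As $u(d)\le\|u\|\,\|d\|=\|d\|$, the first thing I would record is the smallness estimate $m=M/r<\|d\|/r\le 1/2$.

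Next I would split $u$. Applying Hahn--Banach to the restriction $u|_{w^{-1}(0)}$, I extend it to a functional $\tilde u\in Z^*$ with $\|\tilde u\|=m$. Then $u-\tilde u$ vanishes on the hyperplane $w^{-1}(0)$, so $u-\tilde u=\beta w$ for a unique scalar $\beta$; that is, $u=\beta w+\tilde u$. From $\|u\|=1$ and $\|\tilde u\|=m$ the triangle inequality already pins the size of $\beta$, namely $|\beta|=\|\beta w\|=\|u-\tilde u\|\in[1-m,1+m]$ (recall $\|w\|=1$).

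The delicate point---and the step where $\|d\|\le r/2$ is genuinely used---is fixing the \emph{sign} of $\beta$. Evaluating $u=\beta w+\tilde u$ at $d$ gives $\beta\,w(d)=u(d)-\tilde u(d)\ge u(d)-m\|d\|>M-m\|d\|=m(r-\|d\|)$. Since $r\ge 2\|d\|$ we have $r-\|d\|\ge\|d\|\ge 0$, so the right-hand side is nonnegative; as $w(d)>0$ this forces $\beta>0$, hence $\beta\in[1-m,1+m]$ and therefore $|1-\beta|\le m$.

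Finally I would assemble the bound. From $u=\beta w+\tilde u$ we get $w-u=(1-\beta)w-\tilde u$, whence $\|w-u\|\le|1-\beta|\,\|w\|+\|\tilde u\|\le m+m=2m<\tfrac{2}{r}\|d\|=\tfrac{2}{r}\|x_0-y\|$, which is the claim. The only real obstacle is the sign analysis of $\beta$ in the third step; everything else is bookkeeping with the Hahn--Banach extension and the triangle inequality. The degenerate cases are immediate: if $m=0$ then $\tilde u=0$ and $u=\beta w$ with $\beta>0$, $\|u\|=1$, so $u=w$; and $x_0=y$ is excluded by $w(x_0)>w(y)$, so $\|d\|>0$.
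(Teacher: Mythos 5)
Your argument is correct. Note that the paper does not prove this lemma at all---it imports it verbatim as Lemma 3.3.3 of Bourgin's monograph---so there is no in-paper proof to compare against; your write-up supplies a valid self-contained derivation, and it follows the standard route for this kind of estimate (the one Bourgin himself uses): decompose $u=\beta w+\tilde u$ with $\tilde u$ a norm-preserving Hahn--Banach extension of $u|_{\ker w}$, bound $\|\tilde u\|=m<\|d\|/r$ from the separation hypothesis, and use $\|d\|\le r/2$ to force $\beta>0$, whence $|1-\beta|\le m$ and $\|w-u\|\le 2m$. All the individual steps check out: the identity $\sup\{u,y+V_r\}=u(y)+rm$ uses only that $V_r$ is symmetric; the sign argument $\beta w(d)=u(d)-\tilde u(d)>m(r-\|d\|)\ge 0$ is exactly where $\|x_0-y\|\le r/2$ enters; and the final chain $\|w-u\|\le|1-\beta|+\|\tilde u\|\le 2m<\tfrac{2}{r}\|x_0-y\|$ even yields the strict inequality.
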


First we shall discuss the dual Banach case.

\begin{prop}\label{facedual}
Let $f\colon Z \rightarrow {\mathbb R}$ be a convex lower semicontinuous function which is bounded on compact
subsets. If $K\subset Z$ is compact convex, then there exists a $G_\delta$ dense set of elements of $W$ producing faces where $f|_K$ is constant and continuous.
\end{prop}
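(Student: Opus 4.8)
The plan is to reduce the statement to a Baire category argument in the predual $W$, the crucial analytic input being the existence of slices of arbitrarily small $f$-oscillation. After subtracting a constant and rescaling we may assume $0\le f\le 1$ on $K$ (this uses that $f$ is bounded on the compact set $K$). For $w\in W$ and $\alpha\in{\mathbb R}$ write $S(w,\alpha)=\{x\in K:\ w(x)>\alpha\}$ and $\mbox{osc}(A)=\sup_A f-\inf_A f$. The first observation is that a functional $w$ produces a face on which $f|_K$ is constant and continuous \emph{if and only if} for every $n\in\N$ there is a nonempty slice $S(w,\alpha)$ with $\mbox{osc}(S(w,\alpha))<1/n$. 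Indeed, any nonempty slice automatically contains the face $F_w=\{x\in K:\ w(x)=\max_K w\}$, so if such small-oscillation slices exist for all $n$ then $\mbox{osc}(F_w)=0$, i.e.\ $f$ is constant on $F_w$; moreover each such $S(w,\alpha)$ is a relatively open neighbourhood (in $K$) of every point of $F_w$, which together with $\mbox{osc}(S(w,\alpha))<1/n$ gives continuity of $f|_K$ at every point of $F_w$. The converse follows because slices shrink into every relatively open set containing the compact face $F_w$.

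This reduces the proposition to showing that the sets
\[ O_n=\{w\in W:\ S(w,\alpha)\neq\emptyset\text{ and }\mbox{osc}(S(w,\alpha))<1/n\text{ for some }\alpha\in{\mathbb R}\} \]
are open and dense, for then $\bigcap_n O_n$ is the desired dense $G_\delta$ by the Baire theorem in the Banach space $W$. Openness is routine: if $w\in O_n$ is witnessed by $S(w,\alpha)$ and $K\subseteq RB_Z$, then for $w'$ with $\|w-w'\|$ small one checks that $S(w',\alpha')\subseteq S(w,\alpha)$ for $\alpha'$ slightly larger than $\alpha$, while keeping $S(w',\alpha')$ nonempty by first choosing $\alpha$ with $\max_K w-\alpha$ bounded away from $0$; monotonicity of $\mbox{osc}$ then gives $w'\in O_n$.

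The heart of the matter is the density of $O_n$, and this is the step I expect to be the main obstacle. Two ingredients feed it. First, the set $C$ of points of continuity of $f|_K$ is a dense $G_\delta$ in $K$: writing $g(x)=\limsup_{y\to x}f(y)$ for the upper semicontinuous majorant of $f$, one has $g\ge f$ and $C=\bigcap_n\{g-f<1/n\}$; each $\{g-f<1/n\}$ is open because $g-f$ is upper semicontinuous, and it is dense by a standard argument (on a nonempty open $U$ pick an open $U_0\subseteq U$ with $\sup_{U_0}f$ almost minimal among subopen sets, then a point $x_0\in U_0$ with $f(x_0)>\sup_{U_0}f-1/(2n)$, so that $g(x_0)-f(x_0)\le\sup_{U_0}f-f(x_0)<1/n$); density then follows from the Baire property of the compact space $K$.

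Second, and this is the genuinely delicate point, one must manufacture, near an arbitrary direction $w_0$, an \emph{honest} half-space slice of small $f$-oscillation. The difficulty is that a point of continuity of $f$ need not be weak$^*$-exposed, so one cannot simply trap a slice inside a small oscillation-neighbourhood of such a point; spatial smallness of the slice is not available for free. This is exactly where Lemma~\ref{bour} enters: starting from $w_0$ one passes to a thin slice, selects inside it a point $x_0$ of continuity of $f$ together with a nearby point $y$, and uses the tube condition of Lemma~\ref{bour} to cap off a slice at $x_0$ by a functional $w$ whose distance to the previous one is controlled by $\frac{2}{r}\|x_0-y\|$. Iterating so that the successive perturbations are summable, hence converge in norm to some $w$ close to $w_0$, while the oscillations of the capped slices are driven below $1/n$ (the bound $0\le f\le1$ and the density of $C$ keeping the construction alive at each stage), produces the required $w\in O_n$ arbitrarily close to $w_0$. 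Getting the oscillation control and the norm control to cooperate through the iteration, in the absence of any dentability of $Z$, is the crux of the argument.
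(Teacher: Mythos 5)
Your reduction to a Baire category argument in $W$, the definition of the sets $O_n$, the openness argument, and the derivation of the conclusion from $w\in\bigcap_n O_n$ all match the paper's proof in substance (the paper measures slices with the symmetric $\rho(x,y)=\frac{f(x)^2+f(y)^2}{2}-f(\frac{x+y}{2})^2$ rather than with the oscillation of $f$, but for this proposition the two are interchangeable). The lemma that the continuity points of $f|_K$ form a dense $G_\delta$ in $K$ is also correct as you argue it. The gap is exactly where you fear it is: the density of $O_n$. You correctly identify the obstacle --- a continuity point of $f|_K$ need not admit slices inside a prescribed neighbourhood, so spatial localisation of a slice is not free --- but the iteration you then sketch does not overcome it. At no stage of your scheme is there a mechanism to ``cap off a slice at $x_0$'', i.e.\ to produce a half-space slice of $K$ contained in a given small-oscillation neighbourhood of $x_0$; making the successive perturbations from Lemma~\ref{bour} summable controls the norm of the limit functional but never produces the first (or any) slice of small oscillation, so the construction cannot get started.

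The paper's proof supplies the missing mechanism through extreme points, and it uses Lemma~\ref{bour} only once, not iteratively. The two ingredients you lack are: (i) by \cite[Theorem 1.1]{raja}, for a compact convex set $C$ the points of $\mbox{ext}(C)$ at which $f|_C$ is continuous are dense in $\mbox{ext}(C)$, so every nonempty slice of $C$ (which always meets $\mbox{ext}(C)$, by Krein--Milman applied to the face it contains) contains such a point $x_0$; and (ii) Choquet's lemma: at an extreme point of a compact convex set the slices containing it form a neighbourhood base, so the small-oscillation neighbourhood of $x_0$ can be refined to a genuine slice $C\cap\{u>b\}$. The norm control is then obtained in a single application of Lemma~\ref{bour} by enlarging the set beforehand: given $w$ and $\delta$, one takes $y$ with $w(y)<a$, sets $C=\mbox{conv}\bigl(K\cup(y+V_r)\bigr)$ with $r$ large of order $1/\delta$, and finds the extreme continuity point $x_0$ of $C$ inside $\{w>a\}$; automatically $x_0\in\mbox{ext}(K)$, the new slice misses $y+V_r$, hence $u(x_0)>\sup\{u,y+V_r\}$ and Lemma~\ref{bour} gives $\|w-u\|\le\frac{2}{r}\|x_0-y\|\le\delta$ directly. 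Without ingredient (i) (a nontrivial external result) and ingredient (ii), your density argument cannot be completed, so the proposal as it stands is incomplete at its central step.
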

\begin{proof}
Define the pseudo-symmetric $\rho$ by the formula 
\[ \rho(x,y)=\frac{f(x)^2+f(y)^2}{2} - f\bigl(\frac{x+y}{2}\bigr)^2\,.\]
We claim that $\rho(x,y)=0$ implies $f(x)=f(y)=f(\frac{x+y}{2})$ (in particular, if $f$ were strictly convex, 
$\rho$ would be a symmetric). Indeed, it follows easily from this observation
\[ \rho(x,y) \geq  \frac{f(x)^2+f(y)^2}{2} - \left(\frac{f(x)+f(y)}{2} \right)^2 =  
\left(\frac{f(x)-f(y)}{2} \right)^2  \geq 0\,.\]
Now we claim that the set $G(K,\epsilon)$ is open and dense in $W$ for $K \subset Z$ compact  convex and $\epsilon>0$, where
\[ G(K,\epsilon) = \left\{ w \in W: \exists a < \sup\{w,K\}, \rho\mbox{-diam}(K \cap \{w>a\}) < \epsilon\right\}\,. \]
Suppose that $w \in G(K,\epsilon)$\@. If $w' \in W$ is close enough to $w$ to fulfill that 
\[\sup\{w',K\} > \sup\left\{w', K \cap \{w \leq a \}\right\}\]
then $w' \in G(K,\epsilon)$ as well. Thus $G(K,\epsilon)$ is open. In order to see that it is also dense, 
fix $w \in W$ and $\delta\leq1/4$\@. Take $x \in K$ and $y \in Z$
with $w(x) >a> w(y)$ for some $a \in {\mathbb R}$\@. Take $r=\sup\left\{\|x'-y\|,x'\in K\right\}/2\delta$, consider the set $V_r$ given by Lemma~\ref{bour} and define the set $C=\mbox{conv}(K \cup (y +V_r))$\@. 
By~\cite[Theorem 1.1]{raja}, the halfspace $\{w>a\}$ contains a point $x_0 \in \mbox{ext}(C)$ where $f|_C$ is continuous. Notice that
$x_0 \in \mbox{ext}(K)$ and $\|x_0-y\|\leq r/2$\@.
There exists $u \in W$ and 
$b \in {\mathbb R}$ such that $u(x_0)>b$, $C\cap\{u>b\}\subset C\cap\{w>a\}$ and $\rho\mbox{-diam}(C \cap \{u>b\} ) <\epsilon$\@. 
In particular $\rho\mbox{-diam}(K \cap \{u>b\} ) <\epsilon$\@. 
Since $C\cap\{u>b\}$ does not meet $y+V_r$, we have $u(x_0) > \sup\{u,y+V_r\}$\@. Thus, $\|w-u\| \leq \frac{2}{r} \|x_0-y\|\leq \delta$\@. That completes
the proof of the density of $G(K,\epsilon)$ in $W$\@.

By the Baire theorem, the set $G(K)=\bigcap_{n=1}^{\infty} G(K,1/n)$ is dense. If $w \in G(K)$
and $s = \sup\{w, K\}$ then
\[ \lim_{t \rightarrow s^{-}} \rho\mbox{-diam}(K \cap \{w > t\} ) = 0\,. \]
In particular, the face $F = K \cap \{w=s\}$ satisfies that $\rho\mbox{-diam}(F)=0$\@. 
That implies that $f$ is constant on $F$\@. Moreover, we claim that any point
$x \in F$ is a point of continuity of $f|_K$\@. If $(x_\alpha) \subset K$ is a net with limit $x$, then 
$\lim_\alpha w(x_\alpha)=w(x)$\@. Therefore $\lim_\alpha \rho(x_\alpha,x)=0$\@. It follows that
$\lim_\alpha f(x_\alpha)=f(x)$, so $f|_K$ is continuous at $x$\@.\end{proof}

Now the above result can be translated into a more general setting. 

\begin{prop}\label{faceloc}
Let $f\colon X \rightarrow {\mathbb R}$ be a convex lower semicontinuous function which is bounded on compact
subsets. Then for every compact convex subset $K \subset X$ and every open slice $S \subset K$\@,
there is a face $F \subset S$ of $K$ such that $f|_K$ is constant and continuous on $F$\@.
\end{prop}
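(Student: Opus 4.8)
The plan is to transfer the dual Banach space result of Proposition~\ref{facedual} to the general locally convex setting by realizing $K$ inside a suitable weak* compact situation, and then to localize the faces it produces inside the prescribed slice $S$ by a perturbation argument.

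First I would set $B=\mathrm{aconv}(K)$. This is a symmetric convex set, and it is compact without any completeness assumption: writing it as $T(K\oplus(-K))$ with $T(x,y,t)=x+y$, it is the continuous image of the compact set $K\oplus(-K)$ furnished by Lemma~\ref{tech}(a). Let $Z=\mathrm{span}(B)$, normed by the Minkowski functional of $B$. By Lemma~\ref{imbed}, $Z$ is a dual Banach space with unit ball $B$, the inclusion $B\hookrightarrow(Z,w^*)$ is a homeomorphism onto its image, and $f|_Z$ is convex and weak* lower semicontinuous. Since $B\hookrightarrow(Z,w^*)$ is a homeomorphism, $\tau$ and $w^*$ coincide on $B$, hence on every multiple $nB$. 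Consequently $K\subset B$ is weak* compact and convex, and every weak* compact subset of $Z$ is $\tau$-compact (being norm bounded, it lies in some $nB$ where the two topologies agree), so $f|_Z$ is bounded on weak* compact sets. Thus $f|_Z$ and $K$ satisfy all the hypotheses of Proposition~\ref{facedual}. I would also record that each $w\in W$ is $\tau$-continuous on $B$, so a face of $K$ produced by an element of $W$ is a face in the sense of the Definition, and weak* continuity of $f|_K$ on such a face coincides with $\tau$-continuity.

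Proposition~\ref{facedual} then yields a dense set $G(K)\subset W$ of functionals producing faces on which $f|_K$ is constant and continuous, and the remaining point is to find one such $w$ whose face lies inside $S$. Write $S=K\cap\{w_0>a\}$ with $w_0=g|_Z\in W$, where $g\in(X,\tau)^\ast$ defines the slice and $g|_Z\in W$ because $g$ is $\tau$-continuous on $B$. Put $s_0=\sup\{w_0,K\}$, which is finite and satisfies $s_0>a$ since $S\neq\emptyset$, and fix $a'$ with $a<a'<s_0$. For any $w\in W$ and $x\in K$ one has $|w(x)-w_0(x)|\le\|w-w_0\|$ because $\|x\|\le1$. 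Hence if $\|w-w_0\|<(s_0-a')/2$, then $\sup\{w,K\}\ge s_0-\|w-w_0\|$ while $w\le a'+\|w-w_0\|$ on $K\cap\{w_0\le a'\}$, so the face $F_w=K\cap\{w=\sup\{w,K\}\}$ misses $K\cap\{w_0\le a'\}$ and therefore $F_w\subset K\cap\{w_0>a'\}\subset S$. This exhibits a nonempty open subset of $W$, namely the ball of radius $(s_0-a')/2$ about $w_0$, all of whose faces lie in $S$.

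Finally I would intersect this open ball with the dense set $G(K)$: choosing $w$ in the intersection, the face $F=F_w\subset S$ is produced by $w\in W$, hence is a genuine face of $K$, and $f|_K$ is constant and continuous on $F$, as required. The main obstacle is the bookkeeping of the reduction step, namely verifying that $B=\mathrm{aconv}(K)$ is compact without assuming completeness of $X$, and that $\tau$ and the weak* topology agree on bounded sets so that weak* compactness, the hypotheses of Proposition~\ref{facedual}, the notion of face, and the continuity of $f|_K$ all transfer faithfully; the localization estimate itself is short once this setup is in place.
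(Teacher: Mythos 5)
Your proof is correct and follows the same route as the paper: reduce to the dual Banach space $Z=\mathrm{span}(\mathrm{aconv}(K))$ via Lemma~\ref{imbed} and invoke Proposition~\ref{facedual}. You additionally spell out two points the paper leaves implicit --- the compactness of $\mathrm{aconv}(K)$ and, more importantly, the perturbation argument showing that the density of $G(K)$ in $W$ lets one choose a producing functional close enough to the one defining $S$ so that the resulting face lands inside $S$ --- and both are carried out correctly.
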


\begin{proof} By Lemma~\ref{imbed}, $Z=\bigcup_{n=1}^\infty n\, \mbox{aconv}(K)$ is a dual Banach space and $f|_Z$ is weak* lower semicontinuous. Then we can apply the previous proposition.
\end{proof}

It is clear that the last two results are true for countably many functions simultaneously.

\begin{rema} We do not know if the function $f$ in Proposition \ref{facedual} and \ref{faceloc} can be assumed to be defined only on $K$\@. Notice that if $\norm$ is a strictly convex norm on $Z$ then $f(x)=-\sqrt{1-||x||^2}$ is a strictly convex weak* lower semicontinuous function on $(B_Z,w^*)$ that cannot be extended to a convex function on $Z$\@. 
\end{rema}

\section{Exposed points}

Notice that if a strictly convex function is constant on a face of a compact $K$, then necessarily that face should be an exposed point of $K$\@. Having this in mind, Propositions~\ref{facedual} and~\ref{faceloc} can be rewritten. As in the previous section $Z=W^*$ is a dual Banach space endowed with the weak$^*$ topology and we understood all the topological notions referred to that topology.

\begin{prop}\label{expo}
Let $f\colon Z \rightarrow {\mathbb R}$ be a strictly convex lower semicontinuous function which is bounded on compact subsets. If $K\subset Z$ is compact convex, then there exists a $G_{\delta}$ dense set of elements of $W$ exposing points of $K$ at which $f|_K$ is continuous.
\end{prop}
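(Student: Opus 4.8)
The plan is to read this off directly from Proposition~\ref{facedual}. Since a strictly convex function is in particular convex, and $f$ is lower semicontinuous and bounded on compact subsets, the hypotheses of Proposition~\ref{facedual} hold verbatim. I would therefore invoke it to produce the $G_\delta$ dense set $G(K)=\bigcap_{n=1}^{\infty}G(K,1/n)\subset W$ whose elements $w$ produce faces $F=K\cap\{w=s\}$, with $s=\sup\{w,K\}$, on which $f|_K$ is simultaneously constant and continuous. This set is exactly the one I want; it remains only to identify the faces it produces.

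The single new observation — already flagged at the opening of this section — is that a strictly convex function cannot be constant on a face containing more than one point. Indeed, a face is convex, so if $F$ contained distinct points $x\neq y$ then the midpoint $\frac{x+y}{2}$ would lie in $F$ as well, and strict convexity would give $f(\frac{x+y}{2})<\frac{f(x)+f(y)}{2}=f(x)$, contradicting the constancy of $f|_K$ on $F$. Hence every such face $F$ is a singleton $\{x_w\}$, which by definition means $x_w$ is an exposed point of $K$, exposed by $w$. The continuity of $f|_K$ on $F$ is precisely continuity at the exposed point $x_w$, so each $w\in G(K)$ does what is claimed.

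I do not expect any genuine obstacle here: the statement is a reformulation of Proposition~\ref{facedual} under the stronger hypothesis of strict convexity, and its entire content is the elementary remark that strict convexity collapses a constancy-face to a single point. The only things to be careful about are to quote the convexity of faces (so that midpoints stay inside $F$) and to keep the same $G_\delta$ dense set throughout, so that no density or Baire-category argument has to be redone.
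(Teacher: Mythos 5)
Your proposal is correct and matches the paper's argument exactly: the paper derives Proposition~\ref{expo} directly from Proposition~\ref{facedual}, using precisely the observation (stated at the start of the section) that a strictly convex function constant on a face forces that face to be a singleton, hence an exposed point. Nothing is missing.
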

\begin{proof} It follows straightforward from Proposition~\ref{facedual}.
\end{proof}

In particular, we retrieve the following result, which is usually proved in the frame of G\^{a}teaux Differentiability Spaces~\cite[Corollary 2.39 and Theorem 6.2]{phelps}.

\begin{coro}[Asplund, Larman\textendash{}Phelps]
Let $Z$ be a strictly convex dual Banach space. Then every convex compact is the closed convex hull of its exposed points.
\end{coro}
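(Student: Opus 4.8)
The plan is to deduce the corollary directly from Proposition~\ref{expo}, so the main work is exhibiting a suitable strictly convex, weak* lower semicontinuous function bounded on compact sets, and then invoking the Krein\textendash{}Milman machinery. First I would observe that by hypothesis $Z=W^*$ carries a strictly convex dual norm $\|\hspace*{.35em}\|$, which is weak* lower semicontinuous by definition of a dual norm. The natural candidate is $f(x)=\|x\|^2$: it is convex because the norm is convex and $t\mapsto t^2$ is convex and increasing on $[0,\infty)$, and it is strictly convex precisely because the norm is. Weak* lower semicontinuity of the norm transfers to its square since squaring is continuous and increasing. Finally $f$ is bounded on any weak* compact set because such a set is norm-bounded (every weak* compact set is bounded, being weak* compact hence pointwise bounded on $W$, so norm-bounded by uniform boundedness applied in $W=Z_*$). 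Thus $f$ satisfies every hypothesis of Proposition~\ref{expo}.

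Next I would fix an arbitrary weak* compact convex set $K\subset Z$ and let $E=\mathrm{ext}(K)$ be its set of extreme points and $\mathrm{exp}(K)$ its set of exposed points. Proposition~\ref{expo} produces, in particular, a dense set of functionals in $W$ each exposing a point of $K$; but more is true for the corollary: I would argue that $\mathrm{exp}(K)$ is \emph{norming-dense enough} to recover $K$. The clean route is to show $\overline{\mathrm{conv}}^{\,w^*}(\mathrm{exp}(K))=K$. Since $K$ is weak* compact and convex, by Krein\textendash{}Milman it equals $\overline{\mathrm{conv}}^{\,w^*}(E)$, so it suffices to prove $E\subset\overline{\mathrm{conv}}^{\,w^*}(\mathrm{exp}(K))$, or equivalently that no weak* closed convex proper subset containing $\mathrm{exp}(K)$ can omit an extreme point. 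Suppose for contradiction some $x_0\in K$ lies outside $C:=\overline{\mathrm{conv}}^{\,w^*}(\mathrm{exp}(K))$. By Hahn\textendash{}Banach separation in $(Z,w^*)$, whose continuous dual is exactly $W$, there exist $w\in W$ and $a\in\R$ with $w(x_0)>a\geq\sup\{w,C\}$, so the nonempty weak* open slice $S=\{z\in K: w(z)>a\}$ misses $C$ entirely, hence misses every exposed point of $K$.

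The contradiction now comes from density. The set of functionals exposing some point of $K$ is dense in $W$ by Proposition~\ref{expo}, so I can pick $u\in W$ arbitrarily close to $w$ that exposes a point $p\in K$; choosing $u$ close enough in norm guarantees, by an elementary perturbation estimate on the compact set $K$, that the exposing functional's maximum is attained inside the slice $S$, so $p\in S$. But $p$ is an exposed point of $K$, contradicting $S\cap\mathrm{exp}(K)=\emptyset$. Therefore $C=K$, which is the assertion. The step I expect to require the most care is this last perturbation argument: I must ensure that an exposing functional $u$ near $w$ has its exposed point landing in the open slice $S$, which I would handle by noting that on the compact $K$ the map $w\mapsto\arg\max$ is, while not continuous, upper semicontinuous enough that the maximizer of a sufficiently close $u$ falls in any fixed neighborhood of the face $\{w=\sup\{w,K\}\}$; since that face meets $S$ and $S$ is open, the claim follows. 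An alternative, perhaps cleaner, finish is to apply Proposition~\ref{expo} to the restriction of $f$ to $\overline{S}$ regarded via a separating halfspace, directly obtaining an exposed point of $K$ inside $S$ and contradicting $S\cap\mathrm{exp}(K)=\emptyset$ at once.
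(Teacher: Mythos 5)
Your proposal is correct and follows essentially the route the paper intends (the paper leaves this corollary's proof implicit after Proposition~\ref{expo}): take $f(x)=\|x\|^2$, which is strictly convex, weak* lower semicontinuous and bounded on bounded sets, apply Proposition~\ref{expo} to get a dense set of exposing functionals, and finish with Hahn--Banach separation plus the elementary perturbation estimate $|u(z)-w(z)|\leq \|u-w\|\sup\{\|z\|:z\in K\}$ to place an exposed point inside any given slice. All the verifications you flag (strict convexity of the squared norm, weak* lower semicontinuity, boundedness on weak* compact sets, and the stability of the slice under small perturbations of the functional) go through exactly as you describe.
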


\begin{proof}[Proof of Theorem~\ref{main2}]
 It follows straightforward from Proposition~\ref{faceloc}.
\end{proof}

\begin{coro}
Assume that $K \in \mathcal{SC}(X)$\@. Then $K$ is the closed convex hull of its exposed points.
\end{coro}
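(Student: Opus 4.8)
The plan is to reduce the statement to the dual Banach setting, where the Asplund--Larman--Phelps corollary has already been established, and then transport the conclusion back by an affine homeomorphism. Since $K\in\mathcal{SC}(X)$, Theorem~\ref{main1} furnishes a linear embedding $T$ of $K$ into a strictly convex dual Banach space $Z$ equipped with its weak$^*$ topology. Because $K$ is compact and $(Z,w^*)$ is Hausdorff, $T|_K$ is a continuous bijection from a compact space onto $T(K)$, hence a homeomorphism; being the restriction of a linear map it is also affine. Thus $T(K)$ is a weak$^*$ compact convex subset of $Z$ and $T|_K\colon K\to T(K)$ is an affine homeomorphism. Throughout I write $\mbox{exp}(C)$ for the set of exposed points of a compact convex set $C$.

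First I would record that the property ``$C=\overline{\mbox{conv}}(\mbox{exp}(C))$'' is invariant under affine homeomorphisms. The key observation is that precomposition with $T|_K$ is a bijection between the continuous affine functions on $T(K)$ and those on $K$: if $w$ is continuous and affine on $T(K)$, then $w\circ T|_K$ is continuous and affine on $K$, and conversely, precisely because $T|_K$ is an affine homeomorphism. Since the paper's notion of (face and) exposed point is intrinsic, defined through continuous affine functions on the set itself rather than through the ambient dual, it follows that faces correspond to faces, and in particular $T(\mbox{exp}(K))=\mbox{exp}(T(K))$.

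Next I would apply the Asplund--Larman--Phelps corollary to the weak$^*$ compact convex set $T(K)$ inside the strictly convex dual Banach space $Z$, obtaining $T(K)=\overline{\mbox{conv}}^{\,w^*}(\mbox{exp}(T(K)))$. Transporting back along $T|_K$, which preserves convex hulls (it is affine) and closures (it is a homeomorphism), yields
\[ T\bigl(\overline{\mbox{conv}}(\mbox{exp}(K))\bigr)=\overline{\mbox{conv}}^{\,w^*}\bigl(T(\mbox{exp}(K))\bigr)=\overline{\mbox{conv}}^{\,w^*}(\mbox{exp}(T(K)))=T(K). \]
Since $T$ is injective this gives $K=\overline{\mbox{conv}}(\mbox{exp}(K))$, the reverse inclusion being trivial because $\mbox{exp}(K)\subset K$ and $K$ is closed and convex.

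The only delicate point is the bookkeeping in this last transfer: one must verify that the intrinsic exposed points are genuinely preserved by $T|_K$ and that the closed convex hull computed inside the compact set $K$ matches the weak$^*$ closed convex hull inside $T(K)$. Both are soft consequences of $T|_K$ being an affine homeomorphism of compact convex sets, but they are exactly what legitimises the reduction to the Banach case. Note that the lower semicontinuous strictly convex function on $K$ enters only implicitly, through Theorem~\ref{main1}, so no extension of it to the whole ambient space is needed here.
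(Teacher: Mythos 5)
Your proposal is correct and follows exactly the paper's route: the paper's proof is the one-line reduction ``Thanks to Theorem~\ref{main1} it can be reduced to the previous corollary,'' and you have simply spelled out the transfer along the affine homeomorphism $T|_K$ (preservation of intrinsic exposed points, convex hulls and closures) that this reduction tacitly uses. Nothing is missing and nothing differs in substance.
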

\begin{proof} Thanks to Theorem~\ref{main1} it can be reduced to the previous corollary.
\end{proof}

Notice that the previous result is far from being a characterization. For instance, consider $X=C([0,\omega_1])^*$ and $K=(B_X,w^*)$. Then $X$ has the Radon\textendash{}Nikod\'ym Property and thus there exist \emph{strongly exposed points} of $K$ \cite[Theorem 3.5.4]{bourgin}. Nevertheless, Talagrand's argument in~\cite[Theorem 5.2.(ii)]{DGZ} shows that $K\notin \mathcal{SC}(X,w^*)$. Indeed, the result of Larman and Phelps mentioned aboved states that Banach spaces for which each weak* compact convex subset has an exposed point are exactly dual spaces of a G\^{a}teaux Differentiability Space.
 
\begin{rema} A point $x$ in a subset $C$ of a normed space $(Z,\norm)$ is said to be a \emph{farthest point} in $C$ if there exists $y\in Z$ such that $\|y-x\|\geq \sup\{\|y-c\|:c\in C\}$\@. If $\norm$ is strictly convex then every farthest point of $C$ is exposed by a functional in $Z^\ast$\@. In addition, it was shown in~\cite{DZ} that there exists a weak* compact subset of $\ell_1$ that has no farthest points, so the existence of exposed points does not imply the existence of farthest points. On the other hand, suppose that $Z$ is a strictly convex dual Banach space, $C$ is a compact subset of $Z$ and $x$ is a farthest point in $C$ with respect to $y\in Z$\@. Consider the symmetric $\rho(u,v)=\frac{\|u-y\|^2+\|v-y\|^2}{2} - \|\frac{u+v}{2}-y\|^2$\@. Then $x$ is a $\rho-$denting point of $C$, that is, admits slices with arbitrarily small $\rho$-diameter. Indeed, if $\delta = \frac{ \epsilon}{1+2\|x-y\|+2\|y\|}$ then every slice of $C$ that does not meet $B(y, \|y-x\|-\delta)$ has $\rho$-diameter less than $\epsilon$\@. 
\end{rema}

Typically a variational principle provides strong minimum for certain functions after a small perturbation. But in the compact setting, a lower semicontinuous function already attains its minimum. Nevertheless, inspired by Stegall's variational principle~\cite[Theorem 11.6]{banach}, we have obtained the following result.

\begin{prop}
Suppose that $K \in \mathcal{SC}(X)$ and let $f\colon K \rightarrow {\mathbb R}$ be a lower semicontinuous function. Given $\epsilon>0$\@, there exists an affine continuous function $w$ on $K$ with oscillation less than $\epsilon$ such that $f+w$ attains its minimum exactly at one point. Moreover, if $X$ is a dual Banach space then $w$ can be taken from the predual with norm less than $\epsilon$\@.
\end{prop}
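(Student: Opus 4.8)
The plan is to reduce to the dual Banach setting and then convert the minimization of $f+w$ into the problem of \emph{exposing the lowest point} of a convex compact set, where the already proven Proposition~\ref{expo} applies. First I would invoke Theorem~\ref{main1} to imbed $K$ linearly and homeomorphically into a strictly convex dual Banach space $(Z,w^*)$ with $Z=\operatorname{span}(K)=W^*$. Since the oscillation on $K$ of a pulled-back functional $w\circ j$ equals $\sup_{j(K)}w-\inf_{j(K)}w\le \|w\|\,\operatorname{diam}(j(K))$ and $\operatorname{diam}(j(K))$ is a finite constant, it suffices to produce, for each prescribed bound, a predual functional of small norm on $j(K)$. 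Thus I may assume $X=Z=W^*$ is strictly convex with a strictly convex weak* lower semicontinuous norm $\|\cdot\|$, that $K\subset Z$ is weak* compact convex, and that $f\colon K\to\mathbb{R}$ is lower semicontinuous, hence bounded below and attaining its minimum at value $\min_K f$.

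Next I would lift the problem to $Z\times\mathbb{R}=(W\times\mathbb{R})^*$ through the capped epigraph. Setting $c=\min_K f+1$, let $E=\{(x,t):x\in K,\ f(x)\le t\le c\}$ and $D=\overline{\operatorname{conv}}(E)$, a weak* compact convex subset of $Z\times\mathbb{R}$. The point of the lift is the following dictionary: for $(y^*,s)\in W\times\mathbb{R}$ with $s<0$, maximizing the weak* continuous functional $(x,t)\mapsto y^*(x)+st$ over $D$ equals maximizing it over $E$, and because $s<0$ the supremum is attained at $t=f(x)$, so it amounts to minimizing $f+w$ over $K$ with $w=-y^*/|s|\in W$. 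Crucially, the downward component $s<0$ penalizes large values of $f$, so no minimizing configuration can escape to points where $f$ is large; this is exactly the device that makes the argument valid for an \emph{arbitrary} lower semicontinuous $f$ (not assumed convex), replacing the role played by dentability of sublevel sets in the classical Stegall argument.

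Then I would apply Proposition~\ref{expo} to the function $\Phi(x,t)=\|x\|^2+t^2$, which is strictly convex, weak* lower semicontinuous and bounded on compact subsets of $Z\times\mathbb{R}$, and to the compact convex set $D$. This yields a dense $G_\delta$ set $\mathcal{E}\subset W\times\mathbb{R}$ of functionals that expose points of $D$. The set $\mathcal{C}=\{(y^*,s):s<0,\ \|y^*\|<\epsilon|s|\}$ is a nonempty open cone in $W\times\mathbb{R}$, so $\mathcal{E}\cap\mathcal{C}\neq\emptyset$; pick $(y^*,s)$ there and put $w=-y^*/|s|$, so that $\|w\|<\epsilon$. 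By the dictionary, $(y^*,s)$ attains its supremum over $D$ at a single point $(x_0,t_0)\in\operatorname{ext}(D)\subseteq E$; since $s<0$ drives the maximizer to the lowest graph point (and $\|w\|$ is small, so $f(x_0)<c$), one has $t_0=f(x_0)$, and uniqueness of the exposed point translates into $f+w$ attaining its minimum on $K$ at the single point $x_0$. Undoing the imbedding gives the affine continuous $w$ of small oscillation required in the general statement.

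The main obstacle is the simultaneous handling of two constraints: the minimizer must not run off to regions where $f$ is large under the perturbation, and the exposing direction must lie in the admissible family (nearly vertical, with small horizontal part). The epigraph lift disposes of the first through the sign of $s$, while the Baire category argument disposes of the second, since the exposing directions are residual and therefore meet the open cone $\mathcal{C}$. The technical points to verify are that, for directions in $\mathcal{C}$, the supremum on $D$ is attained on the graph strictly below the cap $t=c$, and that uniqueness of the exposed point excludes competing minimizers lying in $\{f>c\}$. Finally, for the `Moreover' part, where $X=V^*$ is the given dual space and $w$ is required in $V$, I would run the same epigraph scheme inside $X\times\mathbb{R}=(V\times\mathbb{R})^*$, the near-vertical weak* slices of $D$ with small horizontal part being furnished by the $(\ast)$-with-slices property of $K$ from Corollary~\ref{SCstar}; checking that these slices persist for the lifted set $D$ and can be chosen in $\mathcal{C}$ is the extra verification needed there.
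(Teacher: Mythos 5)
Your proposal follows essentially the same route as the paper's own proof: reduce to the dual Banach case via Theorem~\ref{main1}, lift to the capped epigraph $D=\overline{\mbox{conv}}\{(x,t): x\in K,\ f(x)\le t\le c\}$ in $Z\times\R$, and apply Proposition~\ref{expo} to perturb the vertical functional into a nearly vertical exposing one; the paper does exactly this, with cap height $m+\epsilon M$ (where $K\subset MB_Z$) in place of your $c=m+1$. One step is misattributed in your write-up: ruling out competing minimizers $y$ with $f(y)>c$ is \emph{not} a consequence of uniqueness of the exposed point, since such $y$ contribute no point of $D$ whatsoever and exposedness of $(x_0,t_0)$ in $D$ is silent about them. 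What is actually needed is the numerical estimate $f(y)+w(y)> c-\epsilon M$ together with $f(x_0)+w(x_0)\le m+\epsilon M$ (compare with the global minimizer of $f$), which with your fixed cap forces $\epsilon\le 1/(2M)$ --- harmless, since proving the statement for smaller $\epsilon$ suffices, but it must be said; the paper instead ties the cap height to $\epsilon$ for the same purpose. Finally, the detour through Corollary~\ref{SCstar} and the $(\ast)$-with-slices property for the ``Moreover'' part is not the right tool (it produces separating families of slices, not a dense set of exposing functionals): the argument as run already yields $w$ in the predual, which is all that is claimed.
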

\begin{proof} 
 By the embedding it is enough to consider the Banach case.
 Let $m$ be the minimum of $f$ and take $M>0$ such that $K\subset M B_X$\@. Consider the compact set
 \[ H = \left\{ (x,t): f(x) \leq t \leq m+\epsilon M \right\} \] 
 and take its convex closed envelop $A$\@. By Proposition~\ref{property}, 
 $A \in \mathcal{SC}(X \times {\mathbb R})$\@. The functional on $X \times {\mathbb R}$
 given by $(0,1)$ attains its minimum on $A$\@. Proposition~\ref{expo} provides a small perturbation of the form $(w,1)$, with $\|w\|<\epsilon$, attaining its minimum on $A$ at one single point $(x_0, t_0)$\@. Notice that $t_0=f(x_0)$ and $f(x_0)+w(x_0)\leq m+\epsilon M$\@. If $y \in K$, then either $f(y) \leq m+\epsilon M$ and $(y,f(y)) \in A$, or $f(y)>m+\epsilon M\geq f(x_0)+w(x_0)$\@.
\end{proof}

\section{Ordinal indices}

Let $K$ be a convex and compact subset of a locally convex topological vector space and $\rho$ a symmetric on $K$\@. We consider the following set derivations:
\begin{align*}
[K]'_\epsilon &= \left\{x\in K : x\in S \text{ slice of } K \Rightarrow \rho\mbox{-diam}(S)\geq \epsilon\right\};\\
\<K\>'_\epsilon &= \left\{x\in K : x\in U \text{ open }  \Rightarrow \rho\mbox{-diam}(S)\geq \epsilon\right\}\,.
\end{align*}
The iterated derived sets are defined as $[K]^{\alpha+1}_\epsilon = [[K]^{\alpha}_\epsilon]'_\epsilon$, $\<K\>^{\alpha+1}_\epsilon = 
\<\<K\>^{\alpha}_\epsilon\>'_\epsilon$ and intersection in case of limit ordinals. If there exists some ordinal such that $[K]^{\alpha}_\epsilon =\emptyset$, then we set $Dz_\rho(K,\epsilon)=\min\left\{\alpha: {[K]^\alpha_\epsilon} = \emptyset\right\}$\@. Otherwise, we take $Dz_\rho(K,\epsilon)=\infty$, which is beyond the ordinals. The $\rho$-dentability index of $K$ is defined by $Dz_{\rho}(K) = \sup_{\epsilon>0} {Dz(K,\epsilon)}$\@. The $\rho$-Szlenk index of $K$, $Sz_{\rho}(K)$, is defined the same way. Obviously $Sz_{\rho}(K)\leq Dz_{\rho}(K)$\@. Set derivations with respect to a symmetric were introduced in \cite{FOR} in order to characterize dual Banach spaces admitting a dual strictly convex norm. 

\begin{prop}\label{sym} Let $K$ be a convex compact subset of a locally convex space. Then the following assertions are equivalent:
\begin{itemize}
\item[a)] $K\in \mathcal{SC}$\@;
\item[b)] there exists a symmetric $\rho$ on $K$ such that $Dz_{\rho}(K)\leq \omega$\@;
\item[c)] there exists a symmetric $\rho$ on $K$ such that $Dz_{\rho}(K)\leq \omega_1$\@.
\end{itemize}
\end{prop}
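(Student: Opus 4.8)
The plan is to run the cycle a)$\Rightarrow$b)$\Rightarrow$c)$\Rightarrow$a), using Corollary~\ref{SCstar} to translate membership in $\mathcal{SC}$ into the property $(\ast)$ with slices. The implication b)$\Rightarrow$c) is immediate, since $Dz_\rho(K)\le\omega$ forces $Dz_\rho(K)\le\omega_1$.

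For a)$\Rightarrow$b) I would first reduce to the dual Banach setting: by Theorem~\ref{main1} (via Lemma~\ref{imbed}) we may assume $K\subset (Z,w^*)$ with $Z=W^*$ carrying a weak$^*$ lower semicontinuous strictly convex norm. Taking $f=\|\cdot\|$ in Proposition~\ref{facedual}, its associated $\rho(x,y)=\tfrac{\|x\|^2+\|y\|^2}{2}-\|\tfrac{x+y}{2}\|^2$ is a genuine symmetric precisely because the norm is strictly convex. The goal is $[K]^\omega_\epsilon=\emptyset$ for every $\epsilon$, and the mechanism is the density statement proved inside Proposition~\ref{facedual}: for any compact convex $C$ the functionals producing a slice of $\rho$-diameter $<\epsilon$ are dense in $W$. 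Since the derived sets $[K]^\alpha_\epsilon$ are not convex, I would apply this to $\overline{\mbox{conv}}\,[K]^\alpha_\epsilon$; by Milman's theorem its extreme points lie in $[K]^\alpha_\epsilon$, so a thin slice of the hull near the maximum of a good functional meets $[K]^\alpha_\epsilon$, and its trace is a slice of $[K]^\alpha_\epsilon$ of $\rho$-diameter $<\epsilon$. This makes the derivation strictly decrease while nonempty. Converting strict decrease into the sharp bound $\omega$ is the delicate point: either one exploits the estimate of Lemma~\ref{bour} uniformly across levels, or, equivalently, one builds the symmetric directly from the separating slice families furnished by $(\ast)$ with slices, weighting the $n$-th family by $2^{-n}$ so that resolving the first $N$ families pushes the $\rho$-diameter below the tail $\sum_{n>N}2^{-n}$.

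For c)$\Rightarrow$a) I would show that a symmetric $\rho$ with $Dz_\rho(K)\le\omega_1$ yields $(\ast)$ with slices, whence a) by Corollary~\ref{SCstar}. Fix $\epsilon=1/m$. Because the $1/m$-derivation reaches $\emptyset$, every point carries a well-defined rank $r_m(x)$, the successor level at which it is deleted, and at that level $x$ sits in a slice of the derived set $[K]^{r_m(x)}_\epsilon$ of $\rho$-diameter $<1/m$. If $\rho(x,y)\ge 1/m$ and $r_m(x)\le r_m(y)$, then both points still belong to $[K]^{r_m(x)}_\epsilon$, so $y$ cannot lie in the small slice deleting $x$. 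The task is to repackage this into one family $\mathcal{U}_m$ of genuine slices of $K$ satisfying conditions a) and b), so that the countable list $(\mathcal{U}_m)_m$ separates all pairs.

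The step I expect to be the main obstacle is exactly this last repackaging: a slice $[K]^\alpha_\epsilon\cap\{w>a\}$ of a derived set is only relatively open in $[K]^\alpha_\epsilon$, while the corresponding slice $\{w>a\}\cap K$ of $K$ typically also captures points deleted at earlier levels, which can destroy condition b). Overcoming this requires a coherent, rank-respecting choice of slices (in the spirit of~\cite[Proposition 2.2]{OST}) guaranteeing that a single countable family per value of $\epsilon$ suffices despite the possibly uncountable length of the derivation. It is worth stressing that the hypothesis enters only through the termination of the derivation---that is, through the existence of the rank function---so any ordinal bound would serve equally well; this is exactly why the a priori weaker requirement $Dz_\rho(K)\le\omega_1$ already pins down the class $\mathcal{SC}$ and collapses back to the optimal bound $\omega$ of b).
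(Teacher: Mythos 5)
Your skeleton (the cycle a)$\Rightarrow$b)$\Rightarrow$c)$\Rightarrow$a) through Corollary~\ref{SCstar}) matches the paper, but both nontrivial implications are left with gaps that you yourself flag and do not close, and in each case you miss the elementary device that makes the paper's proof work. For a)$\Rightarrow$b), your route via the density statement of Proposition~\ref{facedual} and Milman's theorem only shows that the derivation strictly decreases, i.e.\ that $Dz_\rho(K,\epsilon)$ is \emph{some} ordinal; neither of your two proposed remedies for upgrading this to the bound $\omega$ is carried out, and the first (a uniform use of Lemma~\ref{bour}) does not obviously yield it. The paper instead takes $f$ bounded with values in $[0,1]$ (this normalization is arranged in the proof of Theorem~\ref{main1}), sets $\rho(x,y)=\frac{f(x)+f(y)}{2}-f(\frac{x+y}{2})$, and uses the closed convex level sets $F_n=\{f\le 1-n/N\}$ with $N>1/\epsilon$: a point outside $F_{n+1}$ can be separated from $F_{n+1}$ by a slice $S$ of $K$, and for $x,y$ in that slice one gets $\rho(x,y)\le (1-n/N)-(1-(n+1)/N)=1/N<\epsilon$ using only that $f\le 1-n/N$ on $[K]^n_\epsilon\subset F_n$ and $f>1-(n+1)/N$ on $S$. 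Hence $[K]^n_\epsilon\subset F_n$ and $[K]^{N+1}_\epsilon=\emptyset$. The finite bound comes from the bounded range of $f$ sliced into $N$ levels, not from any density or extreme-point argument.

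For c)$\Rightarrow$a), the ``repackaging'' you identify as the main obstacle has a short resolution that you do not supply. Define, for each $n$ and each $\alpha<Dz_\rho(K,1/n)$, the family $\mathcal{U}_{n,\alpha}$ of slices $S$ of $K$ with $[K]^{\alpha+1}_{1/n}\cap S=\emptyset$ and $\rho\mbox{-diam}([K]^{\alpha}_{1/n}\cap S)<1/n$. Given $x\ne y$, pick $n$ with $\rho(x,y)>1/n$ and $\alpha$ least such that $\{x,y\}\cap[K]^{\alpha+1}_{1/n}$ is at most a singleton; minimality forces \emph{both} $x$ and $y$ to lie in $[K]^{\alpha}_{1/n}$, so any $S\in\mathcal{U}_{n,\alpha}$ containing both would put both in a set of $\rho$-diameter $<1/n$, a contradiction. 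The extra points of $K\cap S$ deleted at earlier levels are simply irrelevant because condition b) of $(\ast)$ only has to be checked against the pair $\{x,y\}$ at this particular index. Finally, your closing remark that ``any ordinal bound would serve equally well'' is wrong: $(\ast)$ requires a \emph{countable} sequence of families, so one needs $Dz_\rho(K,1/n)<\omega_1$ for every $n$ to make the index set $\{(n,\alpha)\}$ countable. This is where the hypothesis $Dz_\rho(K)\le\omega_1$ genuinely enters; it self-improves to $Dz_\rho(K)<\omega_1$ because the derived sets are closed and a decreasing $\omega_1$-chain of nonempty closed subsets of a compact space has nonempty intersection.
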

\begin{proof} Let $f$ be a bounded function witnessing that $K\in \mathcal{SC}$ and assume that $f$ takes values in $[0,1]$\@. For a fixed $\epsilon>0$, take $N>1/\epsilon$ and define the closed convex subsets $F_n = \left\{x\in K: f(x)\leq 1-n/N\right\}$ for $n=0,\ldots N$\@. Take 
\[ \rho(x,y)=\frac{f(x)+f(y)}{2}-f\bigl(\frac{x+y}{2}\bigr)\,.\]
We claim that $[K]'_\epsilon \subset F_1$\@. Let $x_0\in K\smallsetminus F_1$\@. By the Hahn\textendash{}Banach theorem, there exists a slice $S$ of $K$ such that $x_0\in S$ and $S\cap F_1 = \emptyset$\@. If $x,y\in S$, then $\frac{x+y}{2}\in S$ and $\rho(x,y)\leq 1 - (1-1/N)=1/N$\@. Thus, $\rho\mbox{-diam}(S)<\epsilon$ and $x_0\notin {[K]'_\epsilon}$\@. By iteration, we get that $[K]^{N}_\epsilon \subset F_N$ and hence $[K]^{N+1}_\epsilon = \emptyset$\@. Therefore, $ Dz_\rho(K,\epsilon)<\omega$ for each $\epsilon>0$\@. 

Now suppose that $Dz_\rho(K)\leq \omega_1$\@. Notice that indeed $Dz_\rho(K)< \omega_1$\@. By Corollary~\ref{SCstar}, it suffices to show that $K$ has $(\ast)$ with slices. For each $n\in \mathbb{N}$ and $\alpha<Dz_\rho(K, 1/n)$ consider the family
\[ \mathcal{U}_{n,\alpha} = \left\{ S : S \text{ slice of } K, [K]_{1/n}^{\alpha+1}\cap S=\emptyset, \rho\mbox{-diam}([K]_{1/n}^\alpha\cap S)<1/n\right\}\,.\]
Given distinct $x,y\in K$, take $n$ so that $\rho(x,y)>1/n$ and let $\alpha$ be the least ordinal such that $\{x,y\}\cap[K]_{1/n}^{\alpha+1}$ is at most a singleton. Then it is clear that there is a slice in $\mathcal{U}_{n,\alpha}$ containing either $x$ or $y$, and no slice in $\mathcal{U}_{n,\alpha}$ contains both points.
\end{proof}

\begin{rema} By using deep results of descriptive set theory, Lancien proved in \cite{lancien} that there exists an universal function $\psi\colon [0,\omega_1)\rightarrow[0,\omega_1)$ such that
$Dz_{\norm}(B_{X^*})\leq \psi(Sz_{\norm}(B_{X^*}))$ whenever $X$ is a Banach space such that $Sz_{\norm}(B_{X^*})<\omega_1$\@. We do not know if a similar statement holds when the norm is replaced by a symmetric. 
\end{rema}

We shall show that we cannot change symmetric by metric in Proposition \ref{sym}.  That would imply that $K$ is a Gruenhage compact, which is a strictly stronger condition that being in $\mathcal{SC}$~\cite[Theorem 2.4]{smith}. By~\cite[Lemma 7.1 and Proposition 7.4]{stegall}, a compact space $K$ is Gruenhage if and only if there exist a countable set $D$, a family of closed sets $\{A_d: d\in D\}$ and families $(\mathcal{U}_{d})_{d\in D}$ of open sets such that the family $\{A_d\cap U : U\in \mathcal{U}_{d}\}$ is pairwise disjoint for each $d\in D$ and the family $\{A_d\cap U : U\in \mathcal{U}_{d}\}$ separates the points of $K$\@.

\begin{prop} Let $K$ be a compact space. Then the following assertions are equivalent:
\begin{itemize}
\item[a)] $K$ is Gruenhage;
\item[b)] there exists a metric $d$ on $K$ such that $Sz_{d}(K)\leq \omega$\@;
\item[c)] there exists a metric $d$ on $K$ such that $Sz_{d}(K)\leq \omega_1$\@.
\end{itemize}

\end{prop}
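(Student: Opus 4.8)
The plan is to prove the cycle $(a)\Rightarrow(b)\Rightarrow(c)\Rightarrow(a)$; the middle implication is immediate since $Sz_d(K)\le\omega\le\omega_1$, so all the work is in the two outer ones. I will use throughout that, because the derived sets $\langle K\rangle^\alpha_\epsilon$ form a decreasing family of compact sets, the equality $\langle K\rangle^{\omega_1}_\epsilon=\emptyset$ already forces $\langle K\rangle^{\beta}_\epsilon=\emptyset$ for some countable $\beta$ (otherwise we would have a decreasing chain of nonempty compacta with empty intersection). Hence in $(c)$ each $Sz_d(K,1/n)$ is a \emph{countable} ordinal, and the same compactness remark shows that in $(b)$ each $Sz_d(K,\epsilon)$ is in fact \emph{finite}.

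For $(a)\Rightarrow(b)$ I would start from the description of a Gruenhage compact recalled just before the statement: a countable set $D=\{d_n:n\in\mathbb{N}\}$, closed sets $A_n:=A_{d_n}$ and families $\mathcal U_n:=\mathcal U_{d_n}$ of open sets such that for each $n$ the traces $\{A_n\cap U:U\in\mathcal U_n\}$ are pairwise disjoint and the whole collection separates points. For each $n$ these traces, together with the complement $C_n=K\setminus\bigcup_{U\in\mathcal U_n}(A_n\cap U)$, form a partition of $K$ and hence an equivalence relation $\sim_n$; let $\rho_n$ be the $\{0,1\}$-valued pseudometric with $\rho_n(x,y)=0$ iff $x\sim_n y$. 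Then $d(x,y)=\sum_{n=1}^\infty 2^{-n}\rho_n(x,y)$ is a bounded pseudometric, and it is a genuine metric precisely because the families separate points. To bound $Sz_d(K,\epsilon)$ I fix $N$ with $2^{-N}<\epsilon/2$; the tail $\sum_{n>N}2^{-n}\rho_n$ contributes at most $2^{-N}$ to any diameter, so it suffices to fragment $K$ to $\langle\cdot\rangle$-diameter $0$ with respect to the common refinement of the finitely many partitions $\rho_1,\dots,\rho_N$. For a single partition into relatively open pieces of a closed set, pairwise disjointness forces the fragmentation to terminate in at most two derivations (first peeling the exterior and landing inside the closed set, then resolving the relatively open partition in the relative topology); a finite induction on $N$, using the standard convolution behaviour of fragmentation indices under common refinements, keeps the number of steps finite. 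This yields $Sz_d(K,\epsilon)<\omega$ for every $\epsilon$, hence $Sz_d(K)\le\omega$.

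For $(c)\Rightarrow(a)$ I would imitate the second half of the proof of Proposition~\ref{sym}. For each $n\in\mathbb{N}$ and each $\alpha<Sz_d(K,1/n)$ (countably many pairs, by the compactness remark) put $A_{n,\alpha}=\langle K\rangle^\alpha_{1/n}$, a closed set, and cover the removed points $A_{n,\alpha}\setminus\langle K\rangle^{\alpha+1}_{1/n}$ by relatively open sets of $d$-diameter $<1/n$. Separation is then automatic: given $x\neq y$, choose $n$ with $d(x,y)>1/n$ and let $\alpha$ be the least stage at which one of $x,y$ is removed; both points still lie in $A_{n,\alpha}$, that point has a relatively open neighbourhood there of $d$-diameter $<1/n<d(x,y)$, and this neighbourhood cannot contain the other point. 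What is genuinely harder than in Proposition~\ref{sym} is that the Gruenhage description demands the traces $\{A_{n,\alpha}\cap U:U\in\mathcal U_{n,\alpha}\}$ to be \emph{pairwise disjoint}, not merely to meet each pair in at most a singleton. This is exactly where the triangle inequality of a genuine metric is needed, in contrast with the mere symmetric of Proposition~\ref{sym}: if two of the chosen small neighbourhoods overlap, their centres are at $d$-distance $<2/n$, so a maximal $d$-separated selection of centres, distributed over an extra countable index according to the separation scale, yields relatively open families that are pairwise disjoint while still isolating, for every pair $x\neq y$, one of the two points. Assembling these families over the countable index set $\{(n,\alpha)\}$ gives the required Gruenhage structure.

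I expect the pairwise disjointness in $(c)\Rightarrow(a)$ to be the main obstacle: extracting pairwise disjoint relatively open families from an arbitrary cover by sets of small $d$-diameter is impossible in the purely symmetric framework, and it is precisely the metric triangle inequality, through a maximal-separation argument, that upgrades the $(\ast)$-type conclusion of Proposition~\ref{sym} to the stronger Gruenhage conclusion. A secondary technical point is the \emph{finiteness} (rather than mere countability) of $Sz_d(K,\epsilon)$ in $(a)\Rightarrow(b)$, which rests on the bookkeeping of how the finitely many relevant partitions combine.
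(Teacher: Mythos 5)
Your overall architecture (prove $(a)\Rightarrow(b)$, note $(b)\Rightarrow(c)$ is trivial, prove $(c)\Rightarrow(a)$) matches the paper's. For $(a)\Rightarrow(b)$ the paper simply cites the construction of \cite[Theorem 2.8]{raja2}, so your direct construction of $d=\sum_n 2^{-n}\rho_n$ from the partitions $\{A_n\cap U: U\in\mathcal{U}_n\}\cup\{C_n\}$ is an acceptable self-contained substitute in outline (each single partition is exhausted in three open derivations: first $K\smallsetminus A_n$, then the relatively open traces $A_n\cap U$, then the remaining single class), although the ``convolution behaviour of fragmentation indices under common refinements'' that you invoke to combine the first $N$ partitions is asserted rather than proved and deserves an explicit induction.

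The genuine gap is in $(c)\Rightarrow(a)$, at exactly the point you yourself flag as the main obstacle. Your proposed disjointification --- ``a maximal $d$-separated selection of centres, distributed over an extra countable index according to the separation scale'' --- does not deliver what the Gruenhage description requires. A maximal $\delta$-separated set of centres yields pairwise disjoint sets only after shrinking to balls of radius less than $\delta/2$, and after that shrinking there is no reason the surviving sets still isolate, for a given pair $x\neq y$, the point that was removed at stage $\alpha$; moreover the resulting sets are open for the metric $d$, not for the compact topology of $K$, whereas the families $\mathcal{U}_d$ in the Gruenhage structure must consist of $K$-open sets; and no argument is offered that countably many such families suffice. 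The paper closes this gap with two specific devices absent from your sketch. First, it takes a $\sigma$-discrete basis $\mathcal{B}=\bigcup_{m\in\N}\mathcal{B}_m$ of the \emph{metric} topology (available for every metric space by the Bing--Stone metrization machinery); the discreteness of each $\mathcal{B}_m$ is what actually replaces your separated-centres idea and is where the triangle inequality is used. Second, it converts each metric-open $V\in\mathcal{B}_m$ into the $K$-open saturation $U_V^{n,\alpha}=\bigcup\bigl\{U : U\text{ open in }K,\ \langle K\rangle^{\alpha}_{2^{-n}}\cap U\subset V\bigr\}$, so that the traces $\langle K\rangle^{\alpha}_{2^{-n}}\cap U_V^{n,\alpha}$ sit inside the pairwise disjoint sets $V$ and are therefore pairwise disjoint, while separation of a given pair $x\neq y$ is recovered by choosing $V\in\mathcal{B}_m$ with $x\in V$, $y\notin V$ and $n$ with $B_d(x,2^{-n+1})\subset V$, and then taking the least $\alpha$ with $x\notin\langle K\rangle^{\alpha+1}_{2^{-n}}$. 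Without these two steps, or an equivalent appeal to Stone's theorem on $\sigma$-discrete open refinements, your argument for pairwise disjointness does not go through.
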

\begin{proof} 
If $K$ is a Gruenhage compact space, then the same construction used in the proof of \cite[Theorem 2.8]{raja2} provides a metric on $K$ such that $Sz_{d}(K)\leq \omega$\@.
 
Now assume that $d$ is a metric on $K$ with countable Szlenk index. Let $\mathcal{B} = \bigcup_{m\in\N} \mathcal{B}_m$ be a basis of the metric topology such that every $\mathcal{B}_m$ is discrete. Consider the open sets $U_V^{n,\alpha} = \bigcup \left\{U : U \text{ open}, \<K\>^\alpha_{2^{-n}} \cap U\subset V\right\}$ and the families $\mathcal{U}_m^{n,\alpha} = \{U_V^{n,\alpha} : V\in \mathcal{B}_m\}$\@. Then $\{\<K\>^\alpha_{2^{-n}}\cap U : U\in\mathcal{U}_m^{n,\alpha}\}$ is pairwise disjoint for each $n,m\in \N$ and $\alpha<Dz(K,2^{-n})$\@. Given distinct $x,y \in K$ take $V\in\mathcal{B}_m$ such that $x\in V$ and $y\notin V$\@. Fix $n$ such that $B_d(x,2^{-n+1})\subset V$\@. Let $\alpha$ be the least ordinal so that $x\notin \<K\>^{\alpha+1}_{2^{-n}}$\@. Then there is an open subset $U$ of $K$ such that $x\in \<K\>_{2^{-n}}^\alpha\cap U$ and $\mbox{diam}(\<K\>_{2^{-n}}^\alpha\cap U)\leq 2^{-n}$\@. Thus $x\in \<K\>_{2^{-n}}^\alpha\cap U_V^{n,\alpha}\subset V$, so $y\notin\<K\>_{2^{-n}}^\alpha\cap U_V^{n,\alpha}$\@.    
\end{proof}

{\footnotesize

}


\begin{thebibliography}{99}
\bibitem{bourgin}{\sc R.D. Bourgin}:
{\it ~Geometric Aspects of Convex Sets with Radon\textendash{}Nikod\'ym Property},
Lect. Notes in Math. 993, Springer Verlag, 1980.

\bibitem{DGZ}{\sc R. Deville, G. Godefroy, V. Zizler}:
{\it Smoothness and renormings in Banach spaces}, Pitman Monographs and Surveys in Pure and Applied Mathematics, 64. Longman Scientific \& Technical, Harlow, 1993.

\bibitem{DZ}{\sc R. Deville, V.E. Zizler}:
{~Farthest points in {$w^*$}-compact sets},
{\it Bull. Austral. Math. Soc.} {\bf 38} (1988), no. 3, 433--439.

\bibitem{banach} {\sc M. Fabian, P. Habala, P. H\'ajek, V. Montesinos, J. Pelant, V. Zizler}:
{\it ~Functional analysis and infinite-dimensional geometry},
CMS Books in Mathematics/Ouvrages de Math\'ematiques de la SMC, 8.
Springer-Verlag, New York, 2001.

\bibitem{GL}{\sc G. Godefroy and D. Li}:
{~Strictly convex functions on compact convex sets and applications},
{Functional analysis}, 182--192, Narosa, New Delhi, 1998.

\bibitem{herve}{\sc M. Herv\'e}:
{~Sur les repr\'esentations int\'egrales \`a l'aide des points extr\'emaux dans un ensemble compact convexe m\'etrisable},
{\it C. R. Acad. Sci. Paris} {\bf 253} (1961) 366--368.

\bibitem{FOR}{\sc S. Ferrari, J. Orihuela, M. Raja}:
{~Weak metrizability of spheres}, to apear.

\bibitem{lancien}{\sc G. Lancien}:
{~On the Szlenk index and the weak*-dentability index},
{\it Quart. J. Math. Oxford} (2) {\bf 47} (1996), no. 185, 59--71.

\bibitem{MOTV}{\sc A. Molt\'o, J. Orihuela, S. Troyanski, M. Valdivia}:
{~On weakly locally uniformly rotund Banach spaces}, 
{\it J. Funct. Anal.} {\bf 163} (1999), no. 2, 252--271.

\bibitem{NG}{\sc K. F. Ng}: 
{~On a theorem of Dixmier},
{\it Math. Scand.} {\bf 29} (1971), 279-280 (1972). 

\bibitem{OST}{\sc J. Orihuela, R. Smith, S. Troyanski}:
{~Strictly convex norms and topology},
{\it Proc. London Math. Soc.} (3) {\bf 104} (2012) 197--222.

\bibitem{OR}{\sc L. Oncina, M. Raja}:
{~Descriptive compact spaces and renorming},
{\it Studia Math.} {\bf 165} (2004), no. 1, 39--52.

\bibitem{phelps}{\sc R. R. Phelps}:
{\it ~Convex Functions, Monotone Operators and Differentiability},
Lect. Notes in Math. 1364, Springer Verlag, 1993.

\bibitem{raja}{\sc M. Raja}:
{~Continuity at the extreme points},
{\it J. Math. Anal. Appl.} {\bf 350} (2009), no. 2, 436--438.

\bibitem{raja2}{\sc M. Raja}:
{~Compact spaces of Szlenk index $\omega$}, 
{\it J. Math. Anal. Appl.} {\bf 391} (2012), no. 2, 496--509. 

\bibitem{riba}{\sc N. K. Ribarska}:
{~Internal characterization of fragmentable spaces}
{\it Mathematika} {\bf 34} (1987), no. 2, 243--257.

\bibitem{riba2}{\sc N. K. Ribarska}:
{~A note on fragmentability of some topological spaces},
{\it C. R. Acad. Bulg. Sci.} {\bf 43} (1990), no.7, 13--15.

\bibitem{smith}{\sc R. Smith}:
{~Strictly convex norms, $G_\delta$-diagonals and non-Gruenhage spaces}, 
{\it Proc. Amer. Math. Soc.} {\bf 140} (2012), no. 9, 3117--3125.

\bibitem{stegall}{\sc C. Stegall}:
{~The topology of certain spaces of measures},
{\it Topology Appl.} {\bf 41} (1991), no. 1-2, 73--112.

\end{thebibliography}
\end{document}